\newcommand{\N}{\mathbb{N}}
\newcommand{\R}{\mathbb{R}}
\newcommand{\C}{\mathbb{C}}
\newcommand{\D}{\mathbb{D}}
\newcommand{\Barr}{\mathrm{B}}
\newtheorem{thm}{Theorem}
\newtheorem*{question}{Question}
\newtheorem{prop}[thm]{Proposition}
\newtheorem{lem}[thm]{Lemma}
\newtheorem{cor}[thm]{Corollary}
\newtheorem{dfn}[thm]{Definition}
\newtheorem{rmk}[thm]{Remark}
\title{Worm domains are not Gromov hyperbolic}
\author{Leandro Arosio$^1$, Gian Maria Dall'Ara$^2$, Matteo Fiacchi$^3$}
\address{$^1$ Department of Mathematics, University of Rome “Tor Vergata”\\
$^2$ Istituto Nazionale di Alta Matematica ``F. Severi", Research Unit SNS Pisa\\
$^3$ Faculty of Mathematics and Physics, University of Ljubljana}
\date{\today}
\subjclass[2010]{Primary 32F45; Secondary  32T20, 53C23}
\begin{document}

\maketitle

\begin{abstract}
We show that  Worm domains are not Gromov hyperbolic with respect to the Kobayashi distance.

\end{abstract}

\tableofcontents
\section{Introduction}

A central problem in contemporary several complex variables is to determine when a complete Kobayashi hyperbolic domain  $\Omega\subset \subset \C^n$ is Gromov hyperbolic when endowed with its Kobayashi distance.
Assume in what follows that $\Omega$ is smoothly bounded. 

Some families of relevant domains are Gromov hyperbolic: Balogh--Bonk \cite{BB} proved it for   strongly pseudoconvex domains, and Zimmer \cite{Zim1} showed it for  convex domains of D'Angelo finite type. The third-named author showed it  \cite{fia} for pseudoconvex domains of finite type in $\C^2$. On the other hand, Gaussier--Seshadri \cite{GS} proved that for smoothly bounded {\it convex} domains $\Omega\subset \subset \C^n$ an analytic disc in the boundary is an obstruction to Gromov hyperbolicity. This result was later strengthened by Zimmer \cite{Zim1}, who showed that the same is true if $\Omega$ is a smoothly bounded  $\C$-{\it convex} domain. The  following important question remains open.

\begin{question}
Is an analytic disc in the boundary an obstruction to Gromov hyperbolicity for    a  smoothly bounded complete Kobayashi hyperbolic domain $\Omega\subset \subset \C^n$?
 \end{question}
 
 In this paper we study the Gromov hyperbolicity of the Worm domains  introduced by Diederich--Forn\ae ss \cite{DF}, which have  a holomorphic annulus in the boundary and are highly non-$\C$-convex. 
 Worm domains play a central role in several complex variables as they provide counterexamples to several important questions. See, e.g., \cite{KP} for a review of  the properties of  Worm domains.
 We actually  consider a more general class of {\sl Worms} (see Definition \ref{defworm}), with an open  Riemann surface  in the boundary, and prove the following result:
\begin{thm}\label{main}
  Worms are not Gromov hyperbolic w.r.t. the Kobayashi distance.
\end{thm}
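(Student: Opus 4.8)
The plan is to construct, inside $(W,k_W)$, a quasi-isometric embedding of a two-dimensional Euclidean flat region (a half-plane, or a sector of positive angle); this is impossible in a Gromov hyperbolic space, since by the stability of quasi-geodesics the images of arbitrarily large (and arbitrarily ``fat'') Euclidean triangles would have to be uniformly thin. Since Worms are complete Kobayashi hyperbolic, $(W,k_W)$ is a geodesic metric space, so Gromov hyperbolicity here is the usual thin-triangles notion and the above scheme applies. The flat region will live near the open Riemann surface $S\subset\partial W$ given by Definition \ref{defworm}. Fix $p_0\in S$ and a complete Poincar\'e geodesic $\gamma:\R\to S$ through $p_0$ both of whose ends tend to the ideal boundary of $S$ and along which $\operatorname{dist}(\gamma(u),\partial S)\asymp e^{-2|u|}$, and, using the fibered structure of $W$ near $S$, let $\Phi(u,t)\in W$ be the point lying over $\gamma(u)$ at Euclidean distance $\asymp e^{-2t}$ from $\partial W$ along the complex normal. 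Here $t$ runs along the standard ``inward'' geodesic direction, $u$ runs along a direction of infinite extent inside $S$ (the Poincar\'e metric of an open Riemann surface blows up toward its ideal boundary), and the natural constraint is $|u|\lesssim t$: one cannot profitably travel a hyperbolic distance $u$ inside $S$ before one has descended to a depth comparable to $\operatorname{dist}(\gamma(u),\partial S)$.

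The key estimate is
\[
k_W\bigl(\Phi(u,t),\Phi(u',t')\bigr)\ \asymp\ |u-u'|+|t-t'|,
\]
with uniform constants, on the region $\{|u|\le t\}$. For the upper bound: moving in $t$ at fixed $u$ costs $\lesssim|t-t'|+C$ because near $S$ the fibers of $W$ degenerate to honest discs, so the inward ray is a Kobayashi geodesic up to bounded error; moving in $u$ at fixed depth costs $\lesssim|u-u'|+C$ by pushing the extremal Poincar\'e discs of $S$ slightly into $W$ and using the distance-decreasing property of $k_W$. For the lower bound: the $t$-part comes from $k_W(p,q)\ge\tfrac12\bigl|\log\operatorname{dist}(p,\partial W)-\log\operatorname{dist}(q,\partial W)\bigr|-C$ for $p,q$ near $S$, which itself follows from pseudoconvexity --- $-\log\operatorname{dist}(\cdot,\partial W)$ is plurisubharmonic near $\partial W$ --- together with the Diederich--Forn\ae ss bounded plurisubharmonic exhaustion of $W$, which is $\asymp-(-\operatorname{dist})^{\eta}$ near $\partial W$ for some $\eta\in(0,1)$ and yields, by a Sibony-type estimate, $\kappa_W(p;\nu)\gtrsim\operatorname{dist}(p,\partial W)^{-1}$ in the complex normal direction $\nu$; the $u$-part follows by composing with the holomorphic projection of $W$ onto the base Riemann surface over which it fibers, comparing Kobayashi distances, and a little bookkeeping using that $k_W$ is comparable to an $\ell^\infty$-product metric in a neighborhood of $S$, which is what guarantees that the $t$- and $u$-costs genuinely add rather than cancel. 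Granting this, four points forming an $R\times R$ ``square'' inside $\{|u|\le t\}$ map under $\Phi$ to four points $x,y,z,w$ for which the two largest of the three sums $k_W(x,y)+k_W(z,w)$, $k_W(x,z)+k_W(y,w)$, $k_W(x,w)+k_W(y,z)$ differ by $\gtrsim R$; letting $R\to\infty$ violates the Gromov four-point inequality, so $W$ is not Gromov hyperbolic.

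The main obstacle is the lower bound, precisely because $W$ is neither convex nor $\C$-convex --- Worms are, in a strong sense, anti-$\C$-convex --- so the supporting-hyperplane arguments of Gaussier--Seshadri and Zimmer are unavailable. What replaces convexity is the explicit local geometry near $S$ furnished by Definition \ref{defworm}: there the defining function of $W$ depends only on the ``fiber'' variable and on a \emph{pluriharmonic} function of the remaining variables, so the Levi form degenerates in a one-sided, controlled way and the complex normal direction behaves as in the strictly pseudoconvex case --- enough to run the Hopf-lemma and Diederich--Forn\ae ss estimates --- while the genuinely twisting, non-convexifiable part of $W$ is confined away from $S$. A secondary difficulty is to certify that $\Phi$ spreads out a two-dimensional region rather than collapsing to a quasi-geodesic line; this is exactly the content of the lower bound in the $u$-direction and of the comparison with the $\ell^\infty$-product near $S$, and it is where one must use that $S$ is a nondegenerate (hyperbolic) Riemann surface with infinite Poincar\'e diameter.
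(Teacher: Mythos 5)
Your overall strategy (exhibit larger and larger quasi-flat rectangles near the spine and contradict thinness of quasi-geodesic triangles / the four-point condition) is the same phenomenon the paper exploits, but your execution runs directly in $W$ via boundary asymptotics, and there the two central estimates are not actually established. The decisive gap is the lower bound in the $u$-direction. The Worm $W$ is \emph{not} a holomorphic fiber bundle over a Riemann surface; the only holomorphic projection available is $(z,w)\mapsto z$, landing in $X_{\mathrm{out}}$ (or $Y$). Since $\overline{X_{\mathrm{in}}}\subset X_{\mathrm{out}}$, every ideal boundary point of the spine surface is an \emph{interior} point of $X_{\mathrm{out}}$, so along your geodesic $\gamma$ (whether it escapes to the ideal boundary of $X_{\mathrm{in}}$ or winds around inside a compact set) the quantity $k_{X_{\mathrm{out}}}(\gamma(u),\gamma(u'))$ stays bounded while $|u-u'|\to\infty$: projecting gives no growth at all in $u$, hence does not certify that $\Phi$ spreads out a two-dimensional region rather than collapsing. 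Your fallback, ``$k_W$ is comparable to an $\ell^\infty$-product metric in a neighborhood of $S$,'' is precisely the statement that needs proof and is essentially equivalent to the conclusion; nothing in the proposal supplies it. (For comparison, the paper only ever obtains product behaviour after Barrett rescaling, in the limit pre-Worm $W_{\mathrm{in}}$, and even there only \emph{locally}, over Kobayashi balls of the base on which the bundle trivializes — not globally near the spine — which is why it works with a sequence of fat quasi-geodesic triangles located over such balls rather than with one embedded flat.)

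The upper bound and the shape of the admissible region are also misidentified. You cannot ``push the extremal Poincar\'e discs of $S$ slightly into $W$'' by a translation: the fibers rotate with $\theta(z)$, and the whole point of the Worm (nontrivial Nebenh\"ulle, Diederich--Forn\ae ss index $<1$) is that the boundary surface cannot be pushed inside in a $\theta$-independent way. What one can do is use sections of the form $w=\delta e^{F(z)}$, $F=v+i\theta$, over simply connected pieces (or on the universal cover); but then the modulus is $\delta e^{v}$, and the harmonic conjugate $v$ is unbounded along paths that wind, so such a disc stays in $W$ only while $v\lesssim\log(1/\delta)\asymp t$. Thus the genuine constraint defining your region is governed by the growth of the conjugate function $v$ (the twisting of $\theta$ along $\gamma$), not by $\operatorname{dist}(\gamma(u),\partial S)\asymp e^{-2|u|}$ as you assert, and the justification offered for the $u$-upper bound ignores exactly the feature that makes Worms hard. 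The paper's proof is structured to avoid all of these boundary estimates: it rescales by $\Barr_n$, proves continuity of $k$ under this convergence (Proposition \ref{lem:continuity_metric}, using that $W_{\mathrm{in}}$ has simple boundary in $W_{\mathrm{out}}$ and is complete), and then kills Gromov hyperbolicity of the limiting pre-Worm by the fiber-bundle argument of Theorem \ref{lem:bundles_not_gromov}. If you want to salvage a direct-in-$W$ argument, the missing ingredient is a two-sided comparison of $k_W$ with a product metric on regions whose base is a large Kobayashi ball of $X_{\mathrm{in}}$ sitting over a trivializing chart, uniformly as the depth $\delta\to0$ — which is, in effect, a quantitative version of the paper's scaling limit.
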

The   proof is based on Barrett's scaling (cf.~\cite[Section 4]{Barrett}). We rescale the Worm $W$ obtaining in the limit a holomorphic fiber bundle, which we call a {\sl pre-Worm}, with base an open hyperbolic Riemann surface and with fiber the right half-plane. We show that such a pre-Worm cannot be Gromov hyperbolic. Since the Kobayashi distance is continuous with respect to this scaling, this yields the result. 

\medskip {\bf Acknowledgements.} LA is partially supported by a MIUR Excellence Department Project awarded to the Department of Mathematics, University of Rome Tor Vergata, CUP E83C18000100006. GD acknowledges the support of Istituto Nazionale di Alta Matematica "F. Severi". MF is supported by the European Union (ERC Advanced grant HPDR, 101053085 to Franc Forstneri\v c). Finally, we thank Caterina Stoppato for communicating us a proof of Proposition \ref{kobcompl} for the Diederich--Forn\ae ss Worm.

\section{Gromov hyperbolicity -- Basic definitions}

In this section we will review some basic definitions and properties of Gromov hyperbolic spaces. The book \cite{BH} is one of the standard references.

\begin{dfn} 
Let $(X,d)$ be a metric space.
For every $x,y,o\in X$  the {\sl Gromov product} is 
$$(x|y)_o:=\frac{1}{2}[d(x,o)+d(y,o)-d(x,y)].$$

The metric space $(X,d)$ is $\delta$-{\sl hyperbolic} if for all $x,y,z,o\in X$
	$$(x|y)_o\geq\min\{(x|z)_o,(y|z)_o\}-\delta.$$
Finally, a metric space is {\sl Gromov hyperbolic} if it is $\delta$-hyperbolic for some $\delta\geq0$.
\end{dfn}

\begin{dfn}
Let $(X,d)$ be a metric space, $I\subset\R$ be an interval and $A\geq1$ and $B\geq0$. A function $\sigma:I\rightarrow X$ is
	\begin{enumerate}
		\item a {\sl geodesic} if for each $s,t\in I$ 
		$$d(\sigma(s),\sigma(t))=|t-s|; $$
		\item a $(A,B)$-{\sl quasigeodesic} if for each $s,t\in I$
		$$A^{-1}|t-s|-B\leq d(\sigma(s),\sigma(t))\leq A|t-s|+B.$$
	\end{enumerate}
	A $(A,B)$-{\sl quasigeodesic triangle} is a choice of three points in $X$ and three $(A,B)$-quasigeodesic segments connecting these points, called its {\sl sides}. If $M\geq 0$, a $(A,B)$-\textit{quasigeodesic triangle} is $M$-{\sl slim} if every side is contained in the $M$-neighborhood of the other two sides.
\end{dfn}

Finally, recall that a metric space $(X,d)$ is {\sl proper} if closed balls are compact, and 
 {\sl geodesic} if  any two points can be connected by a geodesic.
A fundamental property of  geodesic Gromov hyperbolic spaces is that  quasigeodesics are uniformly close to geodesics, a fact which implies the following characterization of Gromov hyperbolicity.

\begin{prop}\cite[Corollary 1.8]{BH}
A proper geodesic metric space $(X, d)$ is $\delta$-hyperbolic if and only if for all $A\geq1$ and $B\geq0$ there exists $M\geq 0$ such that every $(A,B)$-quasi-geodesic triangle is $M$-slim.
\end{prop}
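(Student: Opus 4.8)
The plan is to prove both implications by reducing to two classical facts about geodesic $\delta$-hyperbolic spaces: first, that the four-point Gromov inequality is equivalent, up to a universal change of constant, to the \emph{slim triangles} condition for ordinary geodesic triangles; and second, the \emph{stability of quasigeodesics} (Morse Lemma), i.e.\ that in a $\delta$-hyperbolic geodesic space an $(A,B)$-quasigeodesic and a geodesic with the same endpoints lie within Hausdorff distance $R = R(\delta, A, B)$ of one another (see \cite{BH}). Once these are in hand, both directions are short, so I would spend the bulk of the argument on the Morse Lemma.

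For the direction ``every $(A,B)$-quasigeodesic triangle is $M$-slim $\Rightarrow$ $(X,d)$ is $\delta$-hyperbolic'', I would specialize the hypothesis to $A = 1$, $B = 0$, obtaining a constant $M_0$ such that every geodesic triangle is $M_0$-slim, and then run the standard ``internal points'' computation: given $x, y, o \in X$, build a geodesic triangle on $x, y, o$, mark on its three sides the points realizing the pairwise Gromov products, use $M_0$-slimness to see that these three points are pairwise within $O(M_0)$, and read off the four-point inequality with $\delta$ a universal multiple of $M_0$. This part is routine bookkeeping.

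The substance is in the direction ``$\delta$-hyperbolic $\Rightarrow$ slim quasigeodesic triangles''. I would first record the thin-triangles estimate: the four-point condition forces ordinary geodesic triangles to be $c_1\delta$-slim for a universal constant $c_1$, proved directly from the Gromov product inequality. Then comes the Morse Lemma, in three steps. (a) A \emph{taming} reduction: any $(A,B)$-quasigeodesic lies within bounded Hausdorff distance of a continuous, rectifiable $(A', B')$-quasigeodesic with the same endpoints, so it suffices to treat continuous rectifiable ones. (b) The \emph{logarithmic estimate}: for a continuous rectifiable path $\gamma$ from $p$ to $q$ and any point $x$ on a geodesic $[p,q]$, one has $d(x, \mathrm{im}\,\gamma) \le \delta \log_2 \ell(\gamma) + C$, obtained by recursively bisecting $\gamma$ and applying $c_1\delta$-slimness at each stage; combined with the fact that a quasigeodesic has bounded length over a bounded parameter interval, this shows $[p,q]$ is contained in an $R_0(\delta, A, B)$-neighborhood of the quasigeodesic. (c) The \emph{reverse inclusion}: if the quasigeodesic $c$ had a point at distance more than $R_0 + \mathrm{const}$ from $[p,q]$, consider the last parameter before it and the first parameter after it at which $c$ touches the $R_0$-neighborhood of $[p,q]$; the subpath between these parameters has length at least $A^{-1}\Delta t - B$ by the lower quasigeodesic bound, but since its endpoints are near $[p,q]$ while it stays far from $[p,q]$, step (b) forces $\Delta t$ to be small and hence the excursion to be short --- a contradiction. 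Together these give a Hausdorff bound $R = R(\delta, A, B)$ between a quasigeodesic and a geodesic with the same endpoints.

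Finally I would assemble slimness of quasigeodesic triangles: given vertices $x, y, z$ joined by $(A,B)$-quasigeodesic sides $\sigma_{xy}, \sigma_{yz}, \sigma_{zx}$, pick geodesics $[x,y], [y,z], [z,x]$ (they exist because $X$ is geodesic); by the Morse Lemma each side is within Hausdorff distance $R$ of the corresponding geodesic, and the geodesic triangle is $c_1\delta$-slim, so a point of $\sigma_{xy}$ is $R$-close to $[x,y]$, which is $c_1\delta$-close to $[y,z] \cup [z,x]$, each of which is $R$-close to $\sigma_{yz} \cup \sigma_{zx}$. Hence the quasigeodesic triangle is $M$-slim with $M = 2R + c_1\delta$, depending only on $\delta$, $A$, $B$, as desired. (Properness is not used in this argument; it belongs to the ambient hypotheses under which these equivalences are usually packaged.) I expect step (c) of the Morse Lemma --- quantifying, through the exponential divergence of geodesics captured by (b), how far a quasigeodesic can stray from a geodesic before the length it is forced to have contradicts its lower quasi-isometry bound --- to be the main technical obstacle, with the taming reduction (a) a close second in terms of technical care.
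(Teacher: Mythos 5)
Your outline is correct: the paper does not prove this statement but simply quotes it from Bridson--Haefliger \cite[Ch.~III.H, Cor.~1.8]{BH}, and your argument (four-point condition equivalent to slim geodesic triangles up to universal constants, plus the Morse Lemma on stability of quasigeodesics via taming, the logarithmic bisection estimate, and the excursion argument, then assembling $M=2R+c_1\delta$) is exactly the standard proof given in that reference. Your remark that properness is not actually needed for this equivalence is also accurate; it is merely part of the ambient hypotheses under which the paper applies the result.
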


\section{Worms and pre-Worms}

Let $X$ be an open Riemann surface, and let $\theta:X\rightarrow\R$ be a smooth ``angle'' function.
Consider the domain in $X\times \C$ defined as
\[
Z(X, \theta):=\{(z,w)\in X\times\C\colon \Re(we^{-i\theta(z)})>0 \}, 
\]
which is readily seen to be a smooth fiber bundle with base $X$ and fiber a half-plane. 

\begin{prop}\label{prp:hol_fiber_bundle}
If the function $\theta$ is harmonic, then  $Z(X, \theta)$ is a holomorphic fiber bundle. 
\end{prop}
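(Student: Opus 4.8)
The plan is to build explicit holomorphic local trivializations by ``straightening'' the rotating family of half-planes. Throughout write $\mathbb{H}:=\{\zeta\in\C\colon\Re\zeta>0\}$ for the model fiber.

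First I would cover $X$ by simply connected coordinate patches $U_\alpha$. On each such patch the harmonic function $\theta$ admits a harmonic conjugate $\widetilde\theta_\alpha$, so that $h_\alpha:=\theta+i\widetilde\theta_\alpha$ is holomorphic on $U_\alpha$ with $\Re h_\alpha=\theta$. The key observation is then that the obvious fiberwise rotation $w\mapsto e^{-i\theta(z)}w$, which maps the fiber of $Z(X,\theta)$ over $z$ onto $\mathbb{H}$, fails to be holomorphic in $z$, whereas the modified map
$$\psi_\alpha\colon Z(X,\theta)|_{U_\alpha}\longrightarrow U_\alpha\times\mathbb{H},\qquad \psi_\alpha(z,w):=\bigl(z,\,e^{-ih_\alpha(z)}w\bigr)$$
is holomorphic, with holomorphic inverse $(z,\zeta)\mapsto\bigl(z,e^{ih_\alpha(z)}\zeta\bigr)$. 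It is still a fiberwise bijection onto $U_\alpha\times\mathbb{H}$: since $e^{-ih_\alpha(z)}=e^{\widetilde\theta_\alpha(z)}e^{-i\theta(z)}$ and the factor $e^{\widetilde\theta_\alpha(z)}$ is real and positive, the quantities $\Re\bigl(e^{-ih_\alpha(z)}w\bigr)$ and $\Re\bigl(e^{-i\theta(z)}w\bigr)$ have the same sign, so $(z,w)\in Z(X,\theta)$ if and only if $\psi_\alpha(z,w)\in U_\alpha\times\mathbb{H}$. Hence each $\psi_\alpha$ is a biholomorphism.

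Finally I would check that the transition maps are holomorphic. On $U_\alpha\cap U_\beta$ the function $h_\alpha-h_\beta$ is holomorphic with identically vanishing real part, hence is a locally constant purely imaginary number $ic_{\alpha\beta}$ with $c_{\alpha\beta}\in\R$; therefore
$$\psi_\alpha\circ\psi_\beta^{-1}(z,\zeta)=\bigl(z,\,e^{i(h_\beta(z)-h_\alpha(z))}\zeta\bigr)=\bigl(z,\,e^{-c_{\alpha\beta}}\zeta\bigr),$$
a dilation of $\mathbb{H}$ by the positive constant $e^{-c_{\alpha\beta}}$. In particular the transition functions depend holomorphically (indeed, locally constantly) on the base point and take values in the group of dilations $\mathbb{R}_{>0}\subset\mathrm{Aut}(\mathbb{H})$. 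This exhibits $Z(X,\theta)$ as a holomorphic fiber bundle with fiber $\mathbb{H}$ — in fact a flat one.

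There is no serious obstacle in this argument: the only point that needs care is the observation in the second step that rescaling the non-holomorphic rotation $e^{-i\theta}$ by the positive factor $e^{\widetilde\theta}$ turns it into the holomorphic function $e^{-ih_\alpha}$ without changing which half-plane the fiber gets sent to; once this is seen, the rest is a routine verification of the fiber bundle axioms.
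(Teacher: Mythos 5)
Your proof is correct and follows essentially the same route as the paper: both replace the non-holomorphic rotation $e^{-i\theta(z)}$ by the holomorphic factor $e^{-ih_\alpha(z)}$ (the paper's $e^{-F(z)}$ with $F=v+i\theta$, $v=-\widetilde\theta_\alpha$), noting that the two differ by the positive real factor $e^{\widetilde\theta_\alpha}$ and hence define the same half-plane condition fiberwise. Your additional check of the transition maps (which come out as constant positive dilations, so the bundle is even flat) goes slightly beyond what the paper records and is fine, apart from an immaterial sign in the constant $e^{\pm c_{\alpha\beta}}$.
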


\begin{proof}
Let $v$ be (minus) a local harmonic conjugate of $\theta$, so that $F(z)=v(z)+i\theta(z)$ is a holomorphic function on an open set $U\subset X$. Then $Z(X, \theta)$ is locally defined over $U$ by $\Re(we^{-F(z)})=\Re(we^{-v(z)-i\theta(z)})>0$, and $(z,w)\mapsto (z, e^{-F(z)}w)$ is the desired local trivialization. \end{proof}

\begin{dfn}[pre-Worms]
If the function $\theta$ is harmonic, we call the holomorphic fiber bundle  $Z(X, \theta)$  a {\sl pre-Worm}.
\end{dfn}
\begin{rmk}
Pre-Worms are sectorial domains in the sense of \cite{Barrett2} (see in particular Example 2.2).
\end{rmk}

A pre-Worm $Z(X,\theta)$ with hyperbolic base $X$ is  complete Kobayashi hyperbolic by the following classical result.
\begin{prop}[\cite{kobayascione}, Theorem 3.2.15]\label{prop:bundles_complete}
Let $\pi:E\rightarrow X$ be a holomorphic fiber bundle with fiber $F$. Assume that $F$ and $X$ are both (complete) Kobayashi hyperbolic. Then $E$ is (complete) Kobayashi hyperbolic. 
\end{prop}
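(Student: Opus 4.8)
The plan is to reduce Proposition~\ref{prop:bundles_complete} to a localization property of the Kobayashi distance together with the classical product formula. Two elementary facts are needed. First, since $\pi$ is holomorphic it is distance-decreasing, so
\[
k_X(\pi(p),\pi(q))\le k_E(p,q)\qquad\text{for all }p,q\in E.
\]
Second, by local triviality every $x\in X$ has an open neighborhood $U$, which we may take biholomorphic to a Euclidean ball (hence complete Kobayashi hyperbolic), together with a biholomorphism $\pi^{-1}(U)\cong U\times F$ commuting with the projections. By the product formula $k_{U\times F}=\max\{k_U,k_F\}$ (classical; see \cite{kobayascione}), which in addition shows $U\times F$ is complete whenever $U$ and $F$ are, each $\pi^{-1}(U)$ is (complete) Kobayashi hyperbolic. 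Thus the statement follows once we know: \emph{if $X$ is (complete) hyperbolic and every point of $X$ has a neighborhood $U$ with $\pi^{-1}(U)$ (complete) hyperbolic, then $E$ is (complete) hyperbolic.}

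The key is the following localization lemma: for each $x\in X$ there exist a trivializing neighborhood $U$ of $x$, a smaller $k_X$-ball $B\ni x$, and constants $C\ge1$, $\eta>0$ such that, setting $\Omega:=\pi^{-1}(B)$, one has $k_{\pi^{-1}(U)}(p,q)\le C\,k_E(p,q)$ for all $p,q\in\Omega$ with $k_E(p,q)<\eta$. Granting this, both assertions follow quickly. For hyperbolicity, take $p\ne q$ in $E$: if $\pi(p)\ne\pi(q)$ then $k_E(p,q)\ge k_X(\pi(p),\pi(q))>0$ by hyperbolicity of $X$; if $\pi(p)=\pi(q)=x$ then either $k_E(p,q)\ge\eta>0$, or $k_E(p,q)<\eta$ and then $k_E(p,q)\ge C^{-1}k_{\pi^{-1}(U)}(p,q)>0$ since $\pi^{-1}(U)$ is hyperbolic and $p\ne q$. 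For completeness, a $k_E$-Cauchy sequence $\{p_n\}$ projects to a $k_X$-Cauchy sequence, which converges to some $x\in X$; for $n$ large $p_n\in\Omega$ and $k_E(p_n,p_m)<\eta$, so by the lemma $\{p_n\}$ is eventually $k_{\pi^{-1}(U)}$-Cauchy, hence convergent in $\pi^{-1}(U)\subset E$ by completeness of $\pi^{-1}(U)$.

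It remains to prove the localization lemma, which is the technical heart of the argument and is essentially Royden's localization principle carried out in the presence of the bundle projection. One fixes $\varepsilon>0$ with $B_{k_X}(x,2\varepsilon)\subset U$ and sets $B:=B_{k_X}(x,\varepsilon/4)$, $\eta:=\varepsilon/4$. Given $p,q\in\Omega$ and a chain of analytic discs $f_j:\D\to E$ joining $p$ to $q$ of total length $<\eta$, applying the distance-decreasing property to the discs $\pi\circ f_j:\D\to X$ shows that, for a suitable fixed radius $r=r(\varepsilon)\in(0,1)$, the images $\pi(f_j(r\overline{\D}))$ all lie in $B_{k_X}(x,2\varepsilon)\subset U$; hence the reparametrized discs $\zeta\mapsto f_j(r\zeta)$ still pass through all the chain vertices but now form a chain joining $p$ to $q$ \emph{inside} $\pi^{-1}(U)$. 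The total length of the new chain is at most $C\,k_E(p,q)$ with $C=C(\varepsilon)$, because the passage $d_\D(0,a)\mapsto d_\D(0,a/r)$ is governed by a convex function vanishing at the origin, hence dominated by a linear function on the relevant bounded interval. This yields $k_{\pi^{-1}(U)}(p,q)\le C\,k_E(p,q)$. I expect the only delicate points to be the bookkeeping of the constants $\varepsilon,r,\eta$ ensuring that the shrunk discs land in $\pi^{-1}(U)$ while still realizing the chain, and the length-blowup estimate; everything else is formal, and the hyperbolicity part can alternatively be obtained at the infinitesimal level from the localizability of the Kobayashi--Royden metric.
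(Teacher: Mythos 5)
Your proposal is correct, and since the paper gives no proof of this proposition at all---it simply cites \cite{kobayascione}, Theorem 3.2.15---your localization-plus-product-formula argument essentially reconstructs the standard proof behind that cited result: reduce to trivializing neighborhoods using the distance-decreasing projection $\pi$, shrink chains of analytic discs by a fixed factor $r(\varepsilon)$ so that they stay over a trivializing neighborhood while still passing through the chain vertices, and conclude with $k_{U\times F}=\max\{k_U,k_F\}$. The only point you leave implicit is the existence of $\varepsilon>0$ with $B_{k_X}(x,2\varepsilon)\subset U$, which is justified because on a hyperbolic manifold the Kobayashi distance induces the manifold topology (Barth's theorem), so this is a routine appeal to a standard fact rather than a gap.
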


Now we proceed to the definition of the Worms. First of all, given two compact intervals $I,J\subset \R$ such that $I\subset J^{\circ} $, we denote by $\eta:\R\rightarrow [0,+\infty)$ any smooth function satisfying the following properties: \begin{itemize}
	\item on $I$ the function  $\eta$ vanishes identically; 
	\item on  $\R\setminus I$ the function $\eta$ is real-analytic and  satisfies  $\eta''>0$
	(in particular $\eta$ is strictly positive and $\eta'\neq 0$ on $\R\setminus I$); 
	\item $J=\{\eta\leq 1\}$. 
	\end{itemize} 
The precise choice of a function $\eta$ satisfying the above properties is completely irrelevant for what follows. 

Next, given an open  Riemann surface $Y$ equipped with a smooth angle function $\theta:Y\rightarrow \R$ and two compact intervals $I,J$ as above, we define
\[
W:=\{(z,w)\in Y\times\C\colon |w-e^{i\theta(z)}|^2<1-\eta(\theta(z)) \}.
\]
We assume the following\label{theta_assumptions}: \begin{itemize}
	\item  $\theta$ \emph{has no critical points where $\theta(z)\in \partial I$ or $\theta(z)\in \partial J$}; 
	\item \emph{$\theta^{-1}(J)$ is a compact subset of $Y$}.
	\end{itemize}

\begin{prop}\label{prop:worm}
The domain $W\subset\subset  Y\times \C$ has smooth boundary. Moreover, if $\theta$ is harmonic, then $W$ is  Levi-pseudoconvex.
\end{prop}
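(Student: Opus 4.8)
The plan is to analyze the defining function $\rho(z,w) := |w-e^{i\theta(z)}|^2 - 1 + \eta(\theta(z))$ and verify that $d\rho \neq 0$ on $\{\rho = 0\}$, and then compute the Levi form of $\rho$ when $\theta$ is harmonic.

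For the smoothness statement, I would first check that $W$ is relatively compact: since $|w-e^{i\theta(z)}|^2 < 1$ forces $|w| < 2$, any point of $W$ has $\theta(z) \in J$ (because $\eta(\theta(z)) \le 1$), hence $z \in \theta^{-1}(J)$, which is compact by assumption; so $\overline{W} \subset \theta^{-1}(J) \times \overline{\D(0,2)}$. For the smoothness of $\partial W$, I would compute $\partial_{\bar w}\rho = w - e^{i\theta(z)}$. At a boundary point where this vanishes, we get $w = e^{i\theta(z)}$, so $\rho = 0$ forces $\eta(\theta(z)) = 1$, i.e. $\theta(z) \in \partial J$. Then I would look at the $z$-derivative: $d_z\rho$ contains the term $\eta'(\theta(z))\, d\theta(z)$ plus terms from differentiating $|w - e^{i\theta(z)}|^2$ which vanish when $w = e^{i\theta(z)}$. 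Since $\theta(z) \in \partial J$ implies $\theta$ has no critical point there (first assumption) and $\eta'(\theta(z)) \neq 0$ on $\R \setminus I \supset \partial J$, we get $d_z\rho \neq 0$. So $d\rho \neq 0$ everywhere on $\partial W$, giving smoothness.

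For Levi-pseudoconvexity when $\theta$ is harmonic, I would work on a local coordinate patch $U \subset Y$ where there is a holomorphic $F = v + i\theta$ with $v$ a harmonic conjugate. A clean approach is to use the biholomorphism $(z,w) \mapsto (z, e^{-F(z)}w) =: (z,\zeta)$, under which the condition $|w - e^{i\theta}|^2 < 1 - \eta(\theta)$ transforms; noting $e^{i\theta} = e^{F}e^{-v}$ wait—more carefully, I would substitute and track that the new defining function becomes $|e^{F(z)}\zeta - e^{i\theta(z)}|^2 - 1 + \eta(\theta(z))$, and since $|e^{F(z)}| = e^{v(z)}$ this is $e^{2v(z)}|\zeta - e^{-v(z)+i\theta(z)}|^2 - 1 + \eta(\theta(z)) = e^{2v}|\zeta|^2 - 2\Re(\bar\zeta) + e^{-2v}(1) \cdot$ hmm, let me instead just say: I would expand $|e^F\zeta - e^{i\theta}|^2 = e^{2v}|\zeta|^2 - 2\Re(e^{-F}\overline{\zeta}\cdot e^{i\theta})\cdot$ and simplify using $e^{-F}e^{i\theta} = e^{-v}$, obtaining a defining function of the form $e^{2v(z)}|\zeta|^2 - 2\Re(\zeta) \cdot$ no—$-2e^{-v}\cdot$, so the dependence on $z$ enters only through the harmonic functions $v$ and $\theta$. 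The key point is that after multiplying by the positive factor $e^{-2v(z)}$ (which does not change the zero set or the sign of the Levi form on the zero set), one gets $|\zeta|^2 - 2e^{-2v(z)}\Re(\zeta) + e^{-2v(z)}\eta(\theta(z)) - e^{-2v(z)}$, whose complex Hessian in $(z,\zeta)$ is: the $\partial\bar\partial$ of $|\zeta|^2$ contributes the identity in the $\zeta$ direction; the terms $e^{-2v(z)}$ times (harmonic-in-$z$ functions) — here $\Re\zeta$ and $\eta(\theta(z))$ and the constant — each contribute $\partial_z\bar\partial_z$ of a product of $e^{-2v}$ with something. I would compute these carefully: $\partial\bar\partial(e^{-2v}g) = (\partial\bar\partial e^{-2v})g + e^{-2v}\partial\bar\partial g + 2\Re(\partial e^{-2v}\,\bar\partial g)$, and use $\partial\bar\partial e^{-2v} = 4e^{-2v}|\partial v|^2 \ge 0$ (since $v$ harmonic) together with $\partial\bar\partial \eta(\theta) = \eta''(\theta)|\partial\theta|^2 \ge 0$ (since $\theta$ harmonic and $\eta'' \ge 0$). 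Assembling the full complex Hessian as a quadratic form and restricting to the complex tangent space of $\{\rho = 0\}$, the cross terms should combine into a perfect square (completing the square in the tangent vector) leaving a manifestly nonnegative expression.

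The main obstacle I anticipate is the bookkeeping in this last step: writing the complex Hessian as a $2\times 2$ Hermitian matrix in the frame $(dz, d\zeta)$ and showing its restriction to the complex tangent hyperplane is positive semidefinite. The cross terms coming from $2\Re(\partial e^{-2v}\,\bar\partial(\Re\zeta))$ are the delicate ones, and I expect that the restriction to the tangent space is exactly what kills the potentially-indefinite contributions — i.e. the inequality is \emph{not} true for the full Hessian but only on the complex tangent space, so one genuinely has to use the constraint $\partial\rho \cdot (\text{tangent vector}) = 0$. The terms $4|\partial v|^2 \eta(\theta)$ and $\eta''|\partial\theta|^2$ provide the "good" positivity, and the square completion should absorb the rest; since $v$ and $\theta$ are harmonic this is where harmonicity is essential. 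Alternatively, one can avoid coordinates entirely by invoking Proposition~\ref{prp:hol_fiber_bundle}: $Z(Y,\theta)$ is a holomorphic fiber bundle, pseudoconvexity is local on the base, and near each fiber $W$ looks like a sub-level set defined by a plurisubharmonic exhaustion-type function — but making that precise still requires essentially the same computation, so I would carry out the direct Levi form calculation.
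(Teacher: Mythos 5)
The first half of your argument (precompactness and $d\rho\neq 0$ on $\partial W$) is correct and is exactly the paper's argument: away from $\partial_{\bar w}\rho=0$ there is nothing to prove, and at points with $w=e^{i\theta(z)}$ one has $\eta(\theta(z))=1$, so $\theta(z)\in\partial J$, where $\eta'\neq 0$ and $\theta$ has no critical point, whence $d_z\rho=\eta'(\theta(z))\,d\theta(z)\neq 0$.

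The pseudoconvexity half, however, has a genuine gap: the decisive inequality — nonnegativity of the complex Hessian restricted to the complex tangent space of $\{\rho=0\}$ — is never established. You yourself observe that with your normalization (pass to the trivializing coordinate $\zeta=e^{-F(z)}w$ and multiply by $e^{-2v}$) the full Hessian is \emph{not} positive semidefinite, and you defer the crux to an anticipated ``perfect square'' cancellation after imposing the tangency constraint; that cancellation is exactly what a proof must exhibit, and it is not done. Moreover the algebra you do display is off: with $w=e^{F}\zeta$ the defining function becomes $e^{2v}|\zeta|^2-2e^{v}\Re\zeta+\eta(\theta)$, so after multiplying by $e^{-2v}$ the linear term is $-2e^{-v}\Re\zeta$ (not $-2e^{-2v}\Re\zeta$) and there is no leftover constant $-e^{-2v}$; these wrong coefficients would derail the constrained Levi-form computation you plan. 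The missing idea, which is the paper's trick, is to choose the multiplier so that the defining function becomes plurisubharmonic \emph{outright}, making any restriction to the tangent space unnecessary: multiply the original $r$ by $e^{-v}$ (no change of variables), giving
\[
e^{-v}r=\bigl|e^{-\frac{F}{2}}w\bigr|^2-2\Re\bigl(we^{-F}\bigr)+e^{-v}\,\eta\circ\theta .
\]
The first term is the modulus squared of a holomorphic function, the second is pluriharmonic, and the third depends on $z$ alone and is subharmonic because the cross term $\nabla(e^{-v})\cdot\nabla(\eta\circ\theta)=-e^{-v}(\eta'\circ\theta)\,\nabla v\cdot\nabla\theta$ vanishes by the Cauchy--Riemann equations, $e^{-v}=|e^{-F/2}|^2$ is subharmonic, and $\Delta(\eta\circ\theta)=4|\partial_{\bar z}\theta|^2\,\eta''\circ\theta\geq 0$. (In your $\zeta$-coordinates the analogous fix is to multiply by $e^{-v}$, not $e^{-2v}$, yielding $|e^{F/2}\zeta|^2-2\Re\zeta+e^{-v}\eta\circ\theta$.) Either carry out your constrained Levi-form computation in full, or switch to this normalization, where plurisubharmonicity of the local defining function immediately gives Levi-pseudoconvexity.
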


\begin{proof}
The precompactness of the domain $W$ is a consequence of our assumption that $\theta^{-1}(J)$ is compact. The domain $W$ has defining function \[r(z,w)=w\overline{w}-we^{-i\theta(z)}-\overline{w}e^{i\theta(z)}+\eta(\theta(z)).
\] 
We show that $dr\neq 0$ for all $(z,w)\in \partial W$.   If  $\partial_{\bar{w}}r\neq 0$ this is clear.
 Since $\partial_{\bar{w}}r=w-e^{i\theta(z)}$ vanishes only if $w=e^{i\theta(z)}$, we may assume that this identity holds. Then necessarily $\eta(\theta(z))=1$, that is, $\theta(z)\in \partial J$, in which case \[
\partial_{\bar{z}} r=i\partial_{\bar{z}}\theta(z) we^{-i\theta(z)}-i\partial_{\bar{z}}\theta(z)\overline{w}e^{i\theta(z)}+\eta'(\theta(z))\partial_{\bar{z}}\theta(z) =\eta'(\theta(z))\partial_{\bar{z}}\theta(z)\neq 0
\] by our assumption about the critical points of $\theta$. This proves that $W$ has smooth boundary. 

Since Levi-pseudoconvexity is a local property, we may restrict the $z$ variable to an open set $U\subset Y$ where $\theta(z)$ admits a harmonic conjugate $v(z)$, as in the proof of Proposition \ref{prp:hol_fiber_bundle}. A local defining function for the boundary of $W$ is then given by \[
e^{-v}r = |e^{-\frac{F}{2}}w|^2-2\Re(we^{-F})+e^{-v}\eta\circ \theta, 
\]
where $F(z)=v(z)+i\theta(z)$ is holomorphic. Recalling that moduli squared (resp. real parts) of holomorphic functions are plurisubharmonic (resp. pluriharmonic), we see that $e^{-v}r$ equals a plurisubharmonic function plus $e^{-v}\eta\circ \theta$, which is a function of the variable $z$ alone. If we show that the latter is subharmonic, we are done. One computes \[
\Delta(e^{-v}\eta\circ \theta) = \Delta(e^{-v})\eta\circ \theta +2\nabla(e^{-v})\cdot \nabla(\eta\circ \theta)+ e^{-v}\Delta(\eta\circ \theta), 
\]
where $\Delta$ and $\nabla$ are the ordinary real Laplacian and gradient in $\C\equiv \R^2$.
In $U$ we have
$$\nabla(e^{-v})\cdot \nabla(\eta\circ \theta)=-e^{-v}(\eta'\circ \theta)\nabla v\cdot \nabla \theta=0,$$
by Cauchy--Riemann equations.

Next, notice that $e^{-v}=|e^{-\frac{F}{2}}|^2$ is subharmonic. Since $\eta$ and $e^{-v}$ are nonnegative, all we are left to do to check the nonnegativity of $\Delta(e^{-v}\eta\circ \theta)$ is to verify that $\Delta(\eta\circ \theta)\geq 0$. By direct computation, we see that \[
\Delta(\eta\circ \theta) =4|\partial_{\bar{z}}\theta|^2 \eta''\circ \theta, 
\] 
which is nonnegative thanks to our convexity assumption on the auxiliary function $\eta$.
\end{proof}

\begin{dfn}[Worms]\label{defworm}
	If the function $\theta$ is harmonic (and satisfies the assumptions on page \pageref{theta_assumptions}), we call the domain $W\subset Y\times \C$  a {\sl Worm}.
\end{dfn}

The reader may find a picture of a Worm in Figure \ref{fig:worm}. 

	\begin{figure}
	\begin{center}
		\begin{tikzpicture}
			
			\begin{scope}[xshift=-1cm]
				\draw (2.5,2.8) ellipse (1.2 and 0.2); 
				\draw (0,-2.5) ellipse (1.2 and 0.2); 
				\draw (5,-2.5) ellipse (1.2 and 0.2); 
				\draw (1.2,-2.5) to [out=75, in=180] (2.5,-1) to [out=0, in=105] (3.8, -2.5); 
				\draw (-1.2,-2.5) to [out=60, in=270] (1.3, 2.8); 
				\draw (6.2,-2.5) to [out=120, in=270] (3.7, 2.8); 
				\shade[top color=white, bottom color=gray] (-1.2,-2.5) arc (180:360:1.2 and 0.2)to [out=75, in=180] (2.5,-1) to [out=0, in=105] (3.8, -2.5) arc (180:360:1.2 and 0.2) to [out=120, in=270] (3.7, 2.8) arc (0:-180:1.2 and 0.2) to [out=270, in=60] (-1.2,-2.5); 
				\draw[densely dashed, very thin] (-1.2,-2.5) arc (180:0:1.2 and 0.2); 
				\draw[densely dashed, very thin] (3.8, -2.5) arc (180:0:1.2 and 0.2); 
				\draw[->, very thin, gray] (9,-4) -- (9,4.5) node [black, below right] {$\theta$}; 
				\draw[((-)), thick] (9,-2.15)--(9,2.4); 
				\draw[(-), thick] (9,-1.5)--(9,1.8); 
				\draw[decorate, decoration={brace,mirror}] (9.4,-2.05)--(9.4,2.3) node[midway, right] {$J$}; 
				\draw[decorate, decoration={brace}] (8.8,-1.4)--(8.8,1.7) node[midway, left] {$I$}; 
				\draw[fill=black] (3.3, 1.35) circle (0.05) node [left]{$z_1$}; 
				\draw[very thin, dashed] (3.3, 1.35) -- (9,1.35); 
				\draw[fill=black] (4.4, -1.8) circle (0.05) node [left]{$z_2$}; 
				\draw[very thin, dashed] (4.4, -1.8) -- (9,-1.7); 
				\node at (-2,1.55)	{$Y$}; 	
			\end{scope}
			
			\begin{scope}[scale=1.5, xshift=-2cm, yshift=-6cm]
				
				\filldraw[fill=white!80!black] (0,0)--(35:1) circle (1);
				\draw (0.3,0) arc (0:35:0.3) node[right=0.3cm] {$\theta(z_1)$};  
				\node at (2.5,2.5) [below left] {$\C_{z_1}$}; 
				\draw[->, very thin, gray] (-1.5,0) -- (2.5,0) node[right, black] (1) {$\Re(w)$}; 
				\draw[->, very thin, gray] (0,-1.5) -- (0,2.5) node[above, black] (2) {$\Im(w)$};
				\begin{pgfonlayer}{background}
					\filldraw[very thin, fill=white, draw=gray] (-1.5,-1.5) rectangle (1.east |- 2.north); 
				\end{pgfonlayer}
				\begin{scope}[xshift=5cm]
					\filldraw[fill=white!80!black] (0,0)--(75:1) circle (0.6);
					\draw (0.3,0) arc (0:75:0.3) (0.3,0.2) node[right] {$\theta(z_2)$};  
					\node at (2.5,2.5) [below left] {$\C_{z_2}$}; 
					\draw[->, very thin, gray] (-1.5,0) -- (2.5,0) node[right, black] (1) {$\Re(w)$}; 
					\draw[->, very thin, gray] (0,-1.5) -- (0,2.5) node[above, black] (2) {$\Im(w)$};
					\begin{pgfonlayer}{background}
						\filldraw[very thin, fill=white, draw=gray] (-1.5,-1.5) rectangle (1.east |- 2.north); 
					\end{pgfonlayer}
				\end{scope}
			\end{scope}
		\end{tikzpicture}
	\end{center}
	\caption{A Worm, whose underlying Riemann surface $Y$ (depicted above) has genus zero and three boundary components. In this picture, the harmonic angle function $\theta$ is represented as a height function for visual clarity. In the two boxes below one finds a generic $w$-slice of the worm over a point $z_1\in \theta^{-1}(I)$ (on the left) and $z_2\in \theta^{-1}(J\setminus I)$ (on the right). Notice that, because of the indicated choice of $I$ and $J$, the surfaces $X_{\mathrm{in}}$ and $X_{\mathrm{out}}$ have the same topology (albeit in general different conformal structures).}
	\label{fig:worm}
\end{figure}
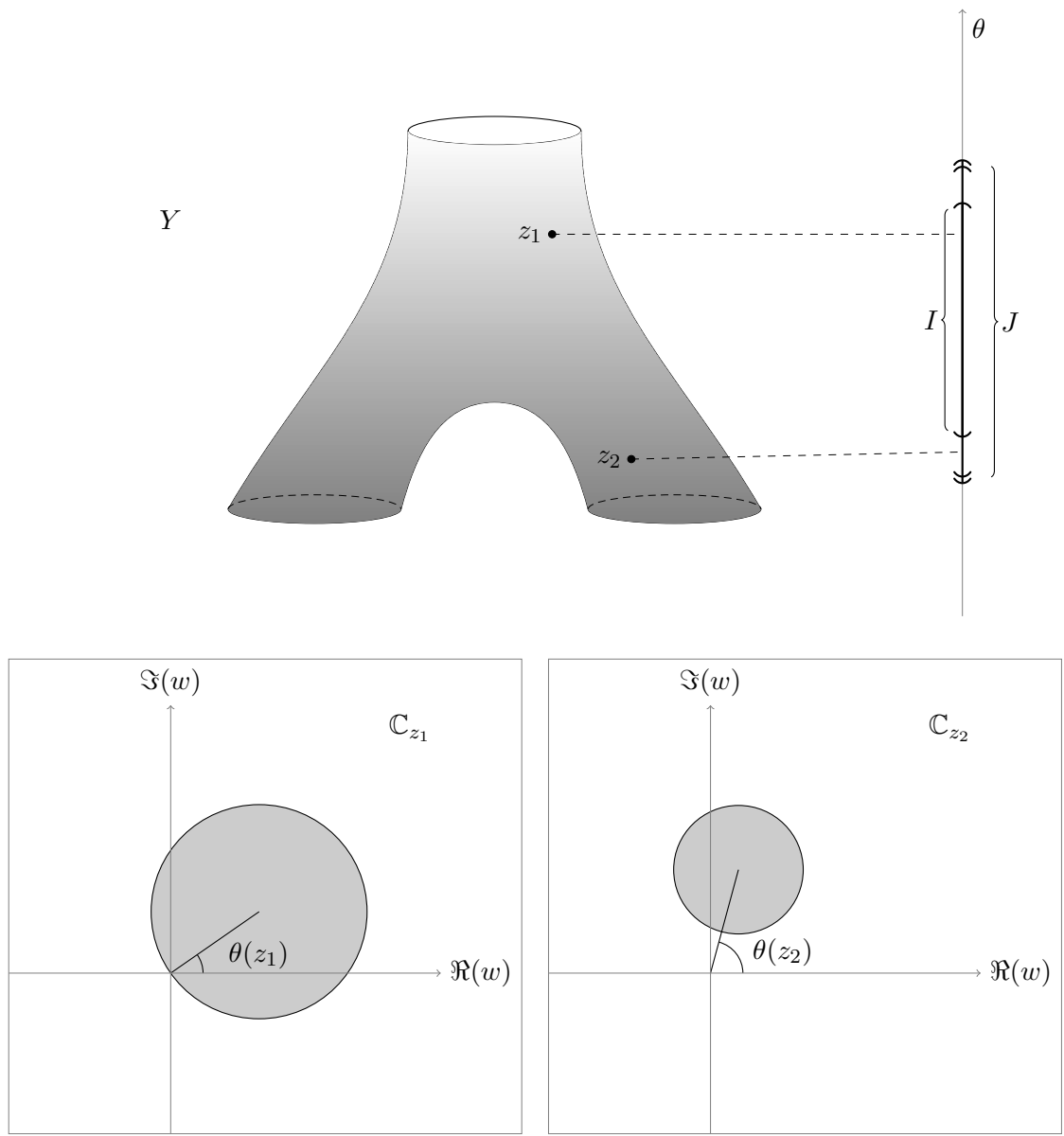

\begin{rmk}
By Docquier--Grauert \cite{DG} every Worm is Stein.
\end{rmk}
For a more refined analysis, we split the boundary of $W$ into four regions: \begin{itemize}
	\item the {\sl spine} of the Worm \[
	S := \{(z,w)\in \partial W\colon \theta(z)\in I, \ w=0\},
	\]	
	\item the {\sl body} of the Worm \[
B := \{(z,w)\in \partial W\colon \theta(z)\in I, \ \partial_z\theta(z)\neq 0, \ w\neq 0\},
	\]
	\item the  {\sl exceptional} set \[
	E := \{(z,w)\in \partial W\colon\ \partial_z\theta(z)=0, \ w\neq 0\},
	\]
	\item the {\sl caps} \[
	C:=\{(z,w)\in \partial W\colon \theta(z)\in J\setminus I, \ \partial_z\theta(z)\neq 0\}.
	\]
	
\end{itemize}

\begin{rmk}\label{X_is_smoothly_bounded}
Identify the slice $\{w=0\}\subset Y\times \C$ with the Riemann surface $Y$. Inside $Y$ the spine $S$ is the closure of the domain  \[
X_{\mathrm{in}}:=\theta^{-1}( I^\circ)\subset \subset Y.\] Since the angle function $\theta$ has no critical point $z\in \theta^{-1}(\partial I)$, the domain $X_{\mathrm{in}}$ is smoothly bounded.  $X_{\mathrm{in}}$ is a Riemann surface contained in the boundary of the Worm $W$, hence every point of the spine $S$ is of D'Angelo infinite type.

 In what follows an important role is also played by the smoothly bounded domain 
\[X_{\mathrm{out}}:=\theta^{-1}( J^\circ)\subset \subset Y.\] 
\end{rmk}

\begin{rmk}
The classical Worm domains introduced by Diederich--Forn\ae ss \cite{DF} correspond to the case where $Y=\C^*$, $\theta(z)=\log |z|^2$. In this case $X_{\mathrm{in}}$ is a holomorphic annulus contained in the boundary of $W$, whose conformal class depends on the choice of the interval $I$. 

A "genus zero" generalization of the Diederich--Forn\ae ss Worms is obtained choosing $Y=\C\setminus \{a_1,\ldots, a_k\}$ and $\theta(z)=\sum_{j=1}^k\lambda_j\log |z-a_j|^2$ (where $\lambda_j>0$). If $I=[-a,b]$ with $a$ and $b$ large enough, the spine $S$ has $k+1$ boundary components. 
\end{rmk}

\begin{prop}\label{haspeak}
The caps $C$ and the body $B$ consist of strongly pseudoconvex points, the exceptional set $E$ consists of finite-type points, and the spine $S$ consists of infinite type points. 
	\end{prop}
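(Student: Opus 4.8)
The plan is to treat the four boundary strata $S$, $B$, $E$, $C$ separately, using the global defining function $r(z,w)=w\bar w-we^{-i\theta(z)}-\bar w e^{i\theta(z)}+\eta(\theta(z))$ from the proof of Proposition~\ref{prop:worm}. For the pseudoconvexity assertions I work on a patch $U\subset Y$ on which $\theta$ has a harmonic conjugate $v$, so that $F=v+i\theta$ is holomorphic and $\partial_z\theta=F'/(2i)$; in particular "$\partial_z\theta\neq0$" is the same as "$F'\neq0$". I use the first-order derivatives $\partial_w r=\bar w-e^{-i\theta}$ and $\partial_z r=\partial_z\theta\,(\eta'(\theta)-2\Im(we^{-i\theta}))$ readily obtained from the proof of Proposition~\ref{prop:worm}, and record that on $\partial W$ one has $|\partial_w r|^2=1-\eta(\theta)$ and, writing $a+ib:=we^{-i\theta}$, $(a-1)^2+b^2=1-\eta(\theta)$.

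\emph{Body and caps.} Since $\dim_\C(Y\times\C)=2$, at a smooth point of $\partial W$ the holomorphic tangent space is the complex line $\C\cdot v^{*}$ with $v^{*}=(\partial_w r,-\partial_z r)$, and strong pseudoconvexity amounts to the single inequality $\mathcal L(v^{*}):=r_{z\bar z}|\partial_w r|^{2}+r_{w\bar w}|\partial_z r|^{2}-2\Re\big(r_{z\bar w}\,\partial_w r\,\overline{\partial_z r}\big)>0$. A short computation (using $\theta$ harmonic) gives $r_{w\bar w}=1$, $r_{z\bar w}=-i\,\partial_z\theta\,e^{i\theta}$, $r_{z\bar z}=|\partial_z\theta|^{2}\big(2\Re(we^{-i\theta})+\eta''(\theta)\big)$, and substituting everything the expression collapses to $\mathcal L(v^{*})=|\partial_z\theta|^{2}T$ with $T=2(1-\eta)a-2\eta'b+\eta''(1-\eta)+\eta'^{2}$, where $\eta,\eta',\eta''$ are evaluated at $\theta(z)$. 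On $B$ one has $\eta=\eta'=\eta''=0$ and $2a=|w|^{2}$, so $T=|w|^{2}>0$. On $C$ the function $T$ is affine in $(a,b)$, so it is enough to bound its minimum over the circle $(a-1)^{2}+b^{2}=1-\eta$ centred at $(1,0)$, namely $2(1-\eta)+\eta''(1-\eta)+\eta'^{2}-2\sqrt{1-\eta}\sqrt{(1-\eta)^{2}+\eta'^{2}}$, which after squaring is strictly positive precisely because $\eta''>0$ on $\R\setminus I$ (this includes the cap tips $\theta(z)\in\partial J$, where $1-\eta=0$ and the minimum equals $\eta'^{2}>0$). Since $\partial_z\theta\neq0$ on $B\cup C$, we get $\mathcal L(v^{*})>0$ there.

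\emph{Spine.} For $(z_0,0)\in S$ and any non-constant holomorphic germ $\gamma\colon(\D,0)\to(Y,z_0)$ one has $r(\gamma(t),0)=\eta(\theta(\gamma(t)))$: this vanishes identically if $\theta(z_0)\in I^{\circ}$ (take $\gamma$ into $\theta^{-1}(I^{\circ})$, recovering the curve $X_{\mathrm{in}}\times\{0\}\subset\partial W$ of Remark~\ref{X_is_smoothly_bounded}), and vanishes to infinite order at $t=0$ if $\theta(z_0)\in\partial I$, because the smooth non-negative function $\eta$ is identically zero on $I$ and hence flat at $\partial I$. In either case $(z_0,0)$ is of infinite D'Angelo type.

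\emph{Exceptional set (the main obstacle).} At $(z_0,w_0)\in E$ one has $\partial_z\theta(z_0)=0$, so $r_{z\bar z},r_{z\bar w},\partial_z r$ all vanish at $z_0$, the Levi form degenerates, and one must prove finite type instead. Because $\theta$ is harmonic, hence real-analytic, and $\theta(z_0)\notin\partial I$, the function $r$ is real-analytic near $(z_0,w_0)$; consequently $(z_0,w_0)$ is of finite D'Angelo type if and only if there is no non-constant holomorphic disc in $\partial W$ through it (a disc along which the real-analytic $r$ vanishes to infinite order must lie in $\{r=0\}$). Assume $\phi=(z(\cdot),w(\cdot))$ is such a disc. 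Then $z(\cdot)$ is non-constant (otherwise $w(\cdot)-e^{i\theta(z_0)}$ would have constant modulus, forcing $w$, hence $\phi$, to be constant), and $h:=\theta\circ z=\Im(F\circ z)$ is non-constant because $F'\not\equiv0$ on $U$. Now pull back along $\phi$ the plurisubharmonic local defining function $\rho=e^{-v}r=|e^{-F/2}w|^{2}-2\Re(we^{-F})+e^{-v}\,\eta\circ\theta$ from the proof of Proposition~\ref{prop:worm}: one obtains $0\equiv|G_1|^{2}-2\Re G_2+Q$ with $G_1=(e^{-F/2}w)\circ\phi$ and $G_2=(we^{-F})\circ\phi$ holomorphic in $t$ and $Q=(e^{-v}\eta\circ\theta)\circ z$ subharmonic; since $Q=2\Re G_2-|G_1|^{2}$ is also superharmonic, $Q$ is harmonic. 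Using the identity $\Delta_z(e^{-v}\eta\circ\theta)=e^{-v}|F'|^{2}\big(\eta\circ\theta+\eta''\circ\theta\big)$, one finishes by cases. If $\theta(z_0)\in J^{\circ}\setminus I$ then $\eta\circ\theta+\eta''\circ\theta>0$ near $z_0$, and $0=\Delta_t Q=\big(e^{-v}|F'|^{2}(\eta\circ\theta+\eta''\circ\theta)\big)(z(t))\,|z'(t)|^{2}$ forces $(F\circ z)'=(F'\circ z)\,z'\equiv0$, i.e. $h$ constant, a contradiction. If $\theta(z_0)\in I^{\circ}$ then $Q\equiv0$ near $t=0$ and $|G_1|^{2}=2\Re G_2$; applying $\Delta_t$ gives $|G_1'|^{2}\equiv0$, so $G_1$ is a non-zero constant (as $w_0\neq0$), hence $\Re G_2$ is constant, so $G_2$ is constant, and then $e^{F\circ z/2}=G_1/G_2$ is constant, again forcing $h$ constant, a contradiction. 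Therefore no such disc exists and $E$ consists of finite-type points. The delicate points of the whole argument are the Levi-form inequality on the caps, which genuinely uses the strict convexity $\eta''>0$ off $I$, and — harder — the exclusion of boundary discs through the exceptional set, for which the plurisubharmonicity of $\rho$ together with the harmonicity of $\theta$ are exactly the required input.
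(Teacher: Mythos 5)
Your proposal is correct, but it takes a genuinely different route from the paper's in two of its three parts. For the body and the caps, the paper works with the local defining function $e^{-v}r=|e^{-F/2}w|^2-2\Re(we^{-F})+e^{-v}\eta\circ\theta$, using subharmonicity of $e^{-v}\eta\circ\theta$ (strict on the caps), the term $|e^{-F/2}w|^2$ at the cap tips, and, on the body, the full Levi matrix together with the observation that its kernel direction $(2,wF'(z))$ is never complex tangent. You instead restrict the Levi form of the global defining function to the complex tangent line $\C\cdot(\partial_w r,-\partial_z r)$ — legitimate since the complex tangent space is one-dimensional and $dr\neq0$ on $\partial W$ — and reduce everything to the scalar inequality $T=2(1-\eta)a-2\eta'b+\eta''(1-\eta)+\eta'^2>0$. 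I checked your second derivatives, the identity for $T$, the value $T=|w|^2$ on $B$, and the minimization over the circle $(a-1)^2+b^2=1-\eta$ including the squaring step (the key point being $(2+\eta'')^2-4s\geq 4\eta''+\eta''^2>0$ for $s=1-\eta\leq1$, plus the degenerate tips where the minimum is $\eta'^2>0$); all of this is correct. Your spine argument via flatness of $\eta$ at $\partial I$ is more self-contained than the paper's, which handles the closure points of $X_{\mathrm{in}}$ only implicitly. For the exceptional set the two proofs genuinely diverge: the paper argues softly that a positive-dimensional variety in the boundary near $E$ would consist of infinite-type points, hence lie in the one-real-dimensional set $E$, which is absurd; you exclude boundary discs through $E$ explicitly, using that $Q=(e^{-v}\eta\circ\theta)\circ z$ is simultaneously sub- and superharmonic along the disc together with the identity $\Delta_z(e^{-v}\eta\circ\theta)=e^{-v}|F'|^2(\eta\circ\theta+\eta''\circ\theta)$, which I verified, and your case analysis ($\theta(z_0)\in I^\circ$ versus $\theta(z_0)\in J^\circ\setminus I$) is exhaustive precisely because $\theta$ has no critical points over $\partial I\cup\partial J$. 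Your route is longer but quantitative; the paper's is shorter but rests on a dimension count.

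One step should not be presented as self-evident: the equivalence ``$(z_0,w_0)$ is of finite D'Angelo type if and only if no nonconstant holomorphic disc in $\partial W$ passes through it.'' Your parenthetical proves only the easy implication (a disc along which the real-analytic $r$ vanishes to infinite order lies in $\{r=0\}$); the direction you actually use — that infinite type at a point of a real-analytic hypersurface forces the existence of a positive-dimensional variety (hence, after normalizing a curve in it, a nonconstant disc) in the hypersurface through that point — is a nontrivial theorem of D'Angelo, and is exactly what the paper invokes via \cite{BER}. Quote it rather than deduce it from the parenthetical; with that citation in place, your argument for $E$, and hence the whole proof, is complete.
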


\begin{proof} 
	We already remarked that $S$ consists of infinite type points. 
	
	In the proof of Proposition \ref{prop:worm}, we saw that the boundary of a worm has a local defining function admitting the representation \[
\tilde{r}=|e^{-\frac{F}{2}}w|^2+e^{-v}\eta\circ \theta + \mathrm{ph}, 
\]
where $\mathrm{ph}$ denotes a pluriharmonic function, and that \[
\Delta_z(e^{-v}\eta\circ \theta )\geq e^{-v} |\partial_{\bar{z}}\theta|^2 \eta''\circ \theta. 
\]
Since the latter quantity is positive on the caps (thanks to the strict convexity assumption on $\eta$), we conclude that the Worm is strictly pseudoconvex at every point of $C$ where $\partial_{{w}}$ is \emph{not   tangent} to the boundary, that is $\partial_{ w}r\neq 0$ (or, equivalently, the vector $(0,1)$ is not in the complex tangent to $\partial W$). If instead $\partial_{ w}r= 0$, then we have $\theta\in \partial J$ (cf. the beginning of the proof of Proposition \ref{prop:worm}) and we may exploit the strong plurisubharmonicity of $|e^{-\frac{F}{2}}w|^2$: \[
\partial_{w}\partial_{\bar{w}}|e^{-\frac{F(z)}{2}}w|^2 = |e^{-\frac{F(z)}{2}}|^2>0. 
\]
Thus, every point of $C$ is strongly pseudoconvex.

We now study points $(z,w)$ in the body $B$, where $\eta\circ \theta\equiv0$. Calculating the Levi form $\mathcal L_{(z,w)}\tilde{r}$ we  obtain, for $(a,b)\in \C^2$,
$$
\mathcal L_{(z,w)}\tilde{r}(a,b)=
\begin{pmatrix}
 \overline a&\overline b 
 \end{pmatrix}
 |e^{-\frac{F(z)}{2}}|^2
 \begin{pmatrix}
 |w|^2\frac{|F'(z)|^2}{4} & -\overline w \frac{\overline{F'(z)}}{2}\\
 -w \frac{F'(z)}{2}& 1
 \end{pmatrix}
 \begin{pmatrix}
  a\\  b
 \end{pmatrix}.
$$
Hence $L_{(z,w)}\tilde{r}(a,b)$ vanishes if and only if $(a,b)\in \C^2$ is a multiple of $(2,wF'(z))$. This readily shows that the Worm is strongly pseudoconvex at every boundary point of the body where $(2,wF'(z))$ \emph{is not in the  complex tangent} to the boundary. But a simple computation shows that the vector $(2,wF'(z))$ is never complex tangent to the boundary since  \[
(2\partial_z + wF'(z)\partial_w)\tilde{r} = wF'(z)e^{-F(z)}\neq 0.
\]
This shows that every point of the body $B$ is strongly pseudoconvex. 

We are left with the proof that every point of the exceptional set $E$ is of finite type. By the Cauchy--Riemann equations, the critical points of $\theta$ are the same as the critical points of the (locally defined) holomorphic function $F$, and hence they are isolated. Thus, $E$ is a finite union of circles and circles with one point deleted (the point with $w=0$, in case the circle crosses the spine). Moreover, since $\theta$ has no critical points on $\partial I$, the boundary of the Worm is real-analytic in an open neighborhood of $E$. Thus, to verify that every point of $E$ is of finite type, we need to check that no positive dimensional complex analytic variety lies in such a neighborhood (see, e.g., \cite{BER}). This is easy, because any point of such a variety would be of infinite type and, since we already checked that $B$ and $C$ consist of strongly pseudoconvex points, this would force the variety to be contained in $E$, which is impossible by dimension considerations (or by the open mapping theorem). 
\end{proof}

\begin{rmk}
The Worms are examples of domains with nontrivial, yet nicely behaved, Levi core. See \cite{DM} and \cite{DM2}, where this notion has been introduced by the second-named author and S. Mongodi. As a consequence of Proposition \ref{haspeak}, the Levi core of a Worm is the $T^{1,0}$ bundle of its spine. A straightforward computation using \cite[Proposition 4.1, part vi)]{DM} shows that the de Rham cohomology class on the spine $S$ (or, equivalently, $X_{\mathrm{in}}$) induced by the D'Angelo class of the Worm is represented by $i(\overline\partial-\partial)\theta$, which is exact if and only if the angle function $\theta$ is globally on $X_{\mathrm{in}}$ the real part of a holomorphic function (that is, the pre-Worm $Z(X_{\mathrm{in}}, \theta|_{X_{\mathrm{in}}})$ is trivial as a fiber bundle). This is in turn equivalent to the condition that the Diederich--Forn\ae ss index of the Worm is $1$. We refer to \cite[Section 4]{DM} for a review of the basic theory of D'Angelo classes and to \cite{AY, DM} for the implications on the Diederich--Forn\ae ss index. 
	\end{rmk}

We end this section proving that Worms are complete Kobayashi hyperbolic. For this, we observe that a Worm $W$ is naturally associated with two pre-Worms.
\begin{dfn} 
Set  $$W_{\mathrm{in}}:=Z(X_{\mathrm{in}},\theta|_{X_{\mathrm{in}}}),\quad W_{\mathrm{out}}:=Z(X_{\mathrm{out}},\theta|_{X_{\mathrm{out}}}),$$
where we are using the notation of Remark \ref{X_is_smoothly_bounded}. Notice that $W_{\mathrm{in}}\subset W_{\mathrm{out}}$ and  $W\subset W_{\mathrm{out}}$.
\end{dfn}

In the remaining of the paper, if $M$ is a complex manifold, we denote by $k_M$ its Kobayashi pseudodistance and by $K_M$ its Kobayashi--Royden pseudometric. The following  lemma is proved in \cite[Lemma 2.1.3]{gaussier-taut}.

\begin{lem}\label{gausstaut}
Let $D\subset\C^d$ be a domain and $k_D$ its Kobayashi distance. If $z_n\rightarrow\xi\in\partial D$ and $\xi$ admits a local holomorphic peak function then for every neighborhood $U$ of $\xi$ we get
$$\lim_{n\to+\infty}k_D(z_n,D\cap U^c)=+\infty.$$
\end{lem}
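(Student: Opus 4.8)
\emph{Strategy.} I would fix a local holomorphic peak function $\varphi\colon D\cap V\to\overline{\D}$ at $\xi$ --- holomorphic on $D\cap V$, continuous on $\overline{D\cap V}$, with $\varphi(\xi)=1$ and $|\varphi|<1$ on $\overline{D\cap V}\setminus\{\xi\}$, taking $V$ bounded --- and aim for an inequality of the shape
\[
k_D(z_n,D\cap U^{c})\ \geq\ \tfrac{1}{2}\log\frac{1}{1-|\varphi(z_n)|}\ -\ C,
\]
with $C$ independent of $n$. Since $\varphi$ is continuous up to $\overline{D\cap V}$ and $z_n\to\xi$, one has $|\varphi(z_n)|\to|\varphi(\xi)|=1$, so the right-hand side diverges, which is the assertion. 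The argument rests on two routine reductions and one substantial one.

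\emph{Reducing the target.} If $U'\subseteq U$ then $D\cap U^{c}\subseteq D\cap(U')^{c}$, hence $k_D(z_n,D\cap U^{c})\geq k_D(z_n,D\cap(U')^{c})$; so it suffices to treat small balls $U=B(\xi,r)$, in particular with $\overline{B(\xi,r)}\subset V$. Moreover $k_D$ is the integrated form of the Kobayashi--Royden metric, hence a length metric, so any path in $D$ from $z_n\in B(\xi,r)$ to a point of $D\cap U^{c}$ must meet the sphere $\partial B(\xi,r)$ at some $q\in D$, and its initial arc up to $q$ is no longer than the whole path; therefore $k_D(z_n,D\cap U^{c})\geq k_D\big(z_n,\,D\cap\partial B(\xi,r)\big)$. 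It is thus enough to bound $k_D(z_n,q)$ from below \emph{uniformly} over $q\in D\cap\partial B(\xi,r)$.

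\emph{Localization and the disc estimate.} Here is the crux. Since $\varphi$ lives only on $D\cap V$ one cannot post-compose it with an arbitrary analytic disc in $D$; instead one invokes a localization of the Kobayashi distance at a local holomorphic peak point (in the spirit of Royden and Forstneri\v{c}--Rosay, and underlying the proof of \cite[Lemma 2.1.3]{gaussier-taut}): there are a neighborhood $V_0\ni\xi$ and $C_0>0$ with $k_D(z,w)\geq k_{D\cap V}(z,w)-C_0$ for all $z,w\in D\cap V_0$. Choosing $r$ so small that also $\overline{B(\xi,r)}\subset V_0$, and using that $\varphi\colon D\cap V\to\D$ is holomorphic, for $n$ large and all $q\in D\cap\partial B(\xi,r)$ one gets
\[
k_D(z_n,q)\ \geq\ k_{D\cap V}(z_n,q)-C_0\ \geq\ k_{\D}\big(\varphi(z_n),\varphi(q)\big)-C_0 .
\]
As $\{\,z\in\overline{D\cap V}:|z-\xi|=r\,\}$ is compact and does not contain $\xi$, there is $\delta>0$ with $|\varphi|\leq 1-\delta$ on it; with the triangle inequality in $\D$ and $k_{\D}(a,0)=\tfrac{1}{2}\log\frac{1+|a|}{1-|a|}$ this gives, uniformly in $q$,
\[
k_{\D}\big(\varphi(z_n),\varphi(q)\big)\ \geq\ \tfrac{1}{2}\log\frac{1}{1-|\varphi(z_n)|}-\tfrac{1}{2}\log\frac{2-\delta}{\delta}.
\]
Hence $k_D(z_n,D\cap\partial B(\xi,r))\to+\infty$, so $k_D(z_n,D\cap U^{c})\to+\infty$ for this $U$, and then for every neighborhood of $\xi$ by the first reduction.

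\emph{The main obstacle.} Everything except the localization is soft (the length-metric property of $k_D$, a compactness argument for $\delta$, and elementary hyperbolic geometry). The localization is precisely where the hypothesis enters: one must show that a chain of analytic discs almost realizing $k_D(z,w)$, with $z,w$ deep inside $V_0$, can be modified --- at the cost of a bounded additive constant --- into a chain that stays in $D\cap V$, the point being that a disc of the chain straying outside $V$ must ``pay'' a fixed positive amount, since $1-|\varphi|$ is bounded below by a constant on $\overline{D\cap V}$ away from a slightly smaller neighborhood of $\xi$ while being arbitrarily small near $\xi$, and composing the offending sub-arcs with $\varphi$ converts this gap into a lower bound on their Poincar\'e length through the blow-up of the Poincar\'e metric near $\partial\D$. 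Carrying this out carefully is the only real work.
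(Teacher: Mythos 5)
A preliminary remark on the comparison you ask for: the paper does not prove this lemma at all --- it is imported verbatim from \cite[Lemma 2.1.3]{gaussier-taut} --- so your proposal has to stand on its own. Its soft parts do stand: the reduction to $U=B(\xi,r)$, the passage via the inner-metric (Royden) property of $k_D$ to the first crossing point $q\in D\cap\partial B(\xi,r)$, the compactness argument giving $|\varphi|\le 1-\delta$ on $\overline{D\cap V}\cap\partial B(\xi,r)$, and the Schwarz--Pick/hyperbolic-distance computation are all correct.

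The genuine gap is that the entire content of the lemma has been displaced into the unproved localization $k_D(z,w)\ge k_{D\cap V}(z,w)-C_0$ for $z,w\in D\cap V_0$, which is at least as strong as the lemma itself (as your own three-line deduction shows), and the sketch you offer for it in the final paragraph does not close. Two concrete objections. First, the assertion that a chain excursion leaving $V$ from a point where $|\varphi|$ is close to $1$ must ``pay a fixed positive amount'' is essentially a uniform version of the very statement being proved, namely that points close to $\xi$ lie at a definite Kobayashi distance from $D\setminus V$; invoking it at this stage is circular. Second, the proposed mechanism --- composing the ``offending sub-arcs'' with $\varphi$ --- does not produce the needed inequality: a disc $f$ of a Kobayashi chain need not map into $D\cap V$, so $\varphi\circ f$ is only defined on a component $\Omega$ of $f^{-1}(D\cap V)$, and Schwarz--Pick there says that $\varphi\circ f$ contracts the hyperbolic metric of $\Omega$, not that of $\D$. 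Since $k_\Omega\ge k_{\D}$, a large drop of $|\varphi\circ f|$ only forces $k_\Omega(0,\sigma)$ (or the $\Omega$-hyperbolic length of the sub-arc) to be large, which gives no lower bound on the quantity $k_{\D}(0,\sigma)$ that actually enters the length of the chain; the inequality runs the wrong way. Overcoming precisely this wrong-way estimate is where the holomorphy (as opposed to mere plurisubharmonicity) of the peak function must be exploited, and it is the real work of the lemma. If your intention is to quote an additive localization theorem for the Kobayashi distance at local holomorphic peak points from the literature, then your argument is a correct and clean reduction to such a theorem and should be presented that way, with a precise reference; as a self-contained proof it is incomplete exactly at its crux.
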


\begin{prop}\label{kobcompl}
Worms are complete Kobayashi hyperbolic. 
\end{prop}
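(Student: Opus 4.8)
My plan is to use the standard completeness criterion. It is a classical fact that for a Kobayashi hyperbolic domain $W\subset\subset Y\times\C$, the domain $W$ is complete Kobayashi hyperbolic if and only if $k_W(z_0,z_n)\to+\infty$ for every $z_0\in W$ and every sequence $z_n\in W$ converging to $\partial W$; moreover $W$ is Kobayashi hyperbolic because it is a subdomain of the pre-Worm $W_{\mathrm{out}}$, which is hyperbolic by Proposition \ref{prop:bundles_complete} (its base $X_{\mathrm{out}}$ being a hyperbolic Riemann surface, as I argue below). So it suffices to fix $z_0$ and a sequence $z_n\to\partial W$; after extracting a subsequence I may assume $z_n\to\xi$ with $\xi$ in one of the four pieces of the boundary decomposition $\partial W=S\cup B\cup E\cup C$, and I would argue separately depending on which piece contains $\xi$.

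If $\xi$ lies in the body $B$, the exceptional set $E$, or the caps $C$, I would quote Proposition \ref{haspeak}: $\xi$ is then either a strongly pseudoconvex point or a finite-type point of a pseudoconvex domain (with, near $E$, real-analytic boundary). In each case, working in a coordinate patch so that locally $W$ is a domain in $\C^2$, the point $\xi$ admits a local holomorphic peak function --- the Levi polynomial in the strongly pseudoconvex case, and the classical construction at finite-type boundary points of pseudoconvex domains in $\C^2$ in the remaining case (recall that no positive-dimensional complex variety passes through $E$, cf.\ \cite{BER}). Realizing $W$ as a bounded domain in some $\C^N$ (legitimate since $W$ is Stein), Lemma \ref{gausstaut} gives $k_W(z_n,W\cap U^c)\to+\infty$ for every neighbourhood $U$ of $\xi$; choosing $U$ so small that $z_0\in W\cap U^c$ then yields $k_W(z_0,z_n)\ge k_W(z_n,W\cap U^c)\to+\infty$.

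The crux is the spine $S$, an entire Riemann surface of infinite-type boundary points, to which the peak-function method does not apply. Here the idea I would use is that $S$ is contained in the boundary of the pre-Worm $W_{\mathrm{out}}$: a point $(z',0)\in S$ has $\theta(z')\in I\subset J^{\circ}$, so $z'\in X_{\mathrm{out}}$, but $\Re\!\big(0\cdot e^{-i\theta(z')}\big)=0$ shows $(z',0)\notin W_{\mathrm{out}}$, while it is plainly a limit of points $(z',\varepsilon e^{i\theta(z')})\in W_{\mathrm{out}}$. Now $X_{\mathrm{out}}=\theta^{-1}(J^{\circ})$ is a relatively compact, smoothly bounded domain in $Y$, hence a hyperbolic Riemann surface, hence complete Kobayashi hyperbolic --- its universal cover is $\D$, and the complete Poincar\'e metric descends to a complete metric on $X_{\mathrm{out}}$ realizing $k_{X_{\mathrm{out}}}$. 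Since the fibre of $W_{\mathrm{out}}$ is a half-plane, biholomorphic to $\D$, Proposition \ref{prop:bundles_complete} shows $W_{\mathrm{out}}$ is complete Kobayashi hyperbolic. Therefore, if $z_n\to\xi\in S\subset\partial W_{\mathrm{out}}$, the sequence $z_n$ leaves every compact subset of $W_{\mathrm{out}}$, so $k_{W_{\mathrm{out}}}(z_0,z_n)\to+\infty$; and since $W\subset W_{\mathrm{out}}$ forces $k_W\ge k_{W_{\mathrm{out}}}$ on $W$, also $k_W(z_0,z_n)\to+\infty$.

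Putting the cases together, every subsequence of $(z_n)$ admits a further subsequence along which $k_W(z_0,\cdot)\to+\infty$, hence $k_W(z_0,z_n)\to+\infty$, which is the required completeness. I expect the peak-function step (second paragraph) to be essentially routine --- the only delicate points being the reduction to $\C^N$ and the precise citation for peak functions at finite-type points in $\C^2$ --- whereas the spine, and specifically the observation that it is absorbed into $\partial W_{\mathrm{out}}$, is the genuinely new ingredient and the step I would be most careful to get right.
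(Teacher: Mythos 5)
Your proof is correct and follows essentially the same route as the paper: boundary points with $w\neq 0$ are finite type or strongly pseudoconvex by Proposition \ref{haspeak} and are handled by local holomorphic peak functions (Bedford--Forn\ae ss/Noell) together with Lemma \ref{gausstaut}, while the spine is handled exactly as in the paper by noting that it lies in $\partial W_{\mathrm{out}}$ and combining $W\subset W_{\mathrm{out}}$ with the completeness of the pre-Worm $W_{\mathrm{out}}$ from Proposition \ref{prop:bundles_complete}. The only cosmetic differences are that the paper argues by contradiction with a nonconvergent Cauchy sequence rather than via boundary divergence of $k_W(z_0,\cdot)$, and that your proposed reduction of Lemma \ref{gausstaut} to a bounded domain in $\C^N$ is not literally available (a Stein embedding of $W$ yields a bounded submanifold, not a domain); one should instead note that the localization argument behind the lemma works verbatim for relatively compact domains in a complex manifold, a point the paper also leaves implicit.
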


\begin{proof}
Assume by contradiction that there exists a nonconvergent Cauchy sequence $\{x_n\}_n$ in $W$. Passing to a subsequence, we can assume that $x_n\rightarrow \xi\in\partial W$. We write $x_n=(z_n,w_n)$ and $\xi=(z_0,w_0)$. 

If $w_0\neq 0$, then  $\xi$ is a pseudoconvex finite type point by Proposition \ref{haspeak}.  By \cite{bedfor} (see also \cite[Section 4]{noell}) $\xi$ admits a local holomorphic peak function and hence it can not be a Cauchy sequence by Lemma \ref{gausstaut}.  

Assume next that $w_0=0$, so that in particular $\xi\in \partial W_{\mathrm{out}}$. Since $W\subset W_{\mathrm{out}}$, it follows that $\{x_n\}_n$ is also a Cauchy sequence w.r.t.~$k_{W_{\mathrm{out}}}$, which converges to $\xi\in\partial W_{\mathrm{out}}$. This contradicts the completeness of the pre-Worm $W_{\mathrm{out}}$ (Proposition \ref{prop:bundles_complete} below).
\end{proof}

\section{Holomorphic fiber bundles are not Gromov hyperbolic}

We recall  a  classical result from the theory of Kobayashi hyperbolic complex manifolds. If $M$ is a complex manifold, we denote by  $B_M(p,r)$ the $k_M$-ball of center $p$ and radius $r$.
\begin{prop}[\cite{kobayascione}, Proposition 3.1.19]\label{lem:bilipschitz}
Let $M$ be a Kobayashi hyperbolic complex manifold.  Let $p\in M$ and $R, \epsilon>0$. Then there exists a constant $C\geq 1$ depending only on  $\epsilon$ such that
$$k_{B_M(p,3R+\epsilon)}(x,y)\leq C k_M(x,y), \quad \forall x,y\in B_M(p,R),$$
and thus the metrics $k_M$ and $k_{B_M(p,3R+\epsilon)}$ are biLipschitz equivalent on $B_M(p,R)$.
\end{prop}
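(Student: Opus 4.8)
The plan is to reduce everything to the distance-decreasing property of the Kobayashi metric under holomorphic maps, together with a quantitative comparison of the Kobayashi metrics of $M$ and of the sub-ball $B_M(p, 3R+\epsilon)$. First I would fix $x, y \in B_M(p, R)$ and recall that, since the inclusion $B_M(p, 3R+\epsilon) \hookrightarrow M$ is holomorphic, one always has $k_M(x,y) \le k_{B_M(p, 3R+\epsilon)}(x,y)$, so only the reverse inequality (with the constant $C$) needs work. The idea is to take a curve in $M$ that is almost $k_M$-geodesic from $x$ to $y$, and to show that it cannot wander too far from $p$: by the triangle inequality any point on an almost-minimizing curve from $x$ to $y$ lies within roughly $k_M(x,p) + k_M(x,y) \le R + 2R = 3R$ of $p$, hence (after allowing a small slack $\epsilon$) stays inside $B_M(p, 3R+\epsilon)$. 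Integrating the Kobayashi--Royden metric along such a curve then shows that the $k_{B_M(p,3R+\epsilon)}$-length of the curve is controlled, which is most of the statement — but with constant $C = 1$, not a constant depending on $\epsilon$.

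The reason a nontrivial constant $C$ (depending on $\epsilon$) appears is that $k_M$ need not be an \emph{inner} (length) distance — a priori it is only bounded above by the associated inner distance. So the cleaner route is the infinitesimal one: compare the Kobayashi--Royden pseudometrics $K_M$ and $K_{B_M(p,3R+\epsilon)}$ pointwise on $B_M(p,R)$, and then integrate. For a point $q \in B_M(p,R)$ and a tangent vector $v$, one produces competitor analytic discs $\varphi : \mathbb{D} \to M$ with $\varphi(0)=q$, $\varphi'(0) = \lambda v$; the key point is that such a disc, restricted to a slightly smaller disc $\mathbb{D}_\rho$ with $\rho = \rho(\epsilon)$ close to $1$, has image inside $B_M(p, 3R + \epsilon)$, because the $k_M$-diameter of $\varphi(\mathbb{D}_\rho)$ is controlled by the Poincaré radius $\rho$ and $k_M(q,p) \le R$. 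Rescaling $\mathbb{D}_\rho$ back to $\mathbb{D}$ costs a factor $1/\rho$ in the derivative, giving $K_{B_M(p,3R+\epsilon)}(q;v) \le \rho^{-1} K_M(q;v)$, i.e.\ $C = \rho^{-1}$ depends only on $\epsilon$. Then $k_{B_M(p,3R+\epsilon)}(x,y) \le C\, k_M(x,y)$ follows by integrating this metric inequality along an almost-geodesic for $k_M$ joining $x$ to $y$ (which, as noted, stays in $B_M(p, 3R+\epsilon)$), using that on a hyperbolic manifold $k_M$ is the integrated distance of $K_M$.

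The main obstacle — and the step deserving the most care — is the containment claim: that an analytic disc through $q \in B_M(p,R)$ realizing (nearly) the Kobayashi--Royden metric maps a fixed sub-disc $\mathbb{D}_\rho$ into $B_M(p,3R+\epsilon)$. One must be careful that the disc $\varphi$ is only \emph{almost} extremal, so its $k_M$-length over $\mathbb{D}_\rho$ is controlled only up to an arbitrarily small error, and one must choose the slack in the radii $R \mapsto 3R+\epsilon$ to absorb both this error and the Poincaré distance $k_{\mathbb{D}}(0, \rho)$. Concretely: $k_M(\varphi(\zeta), p) \le k_M(\varphi(\zeta), q) + k_M(q,p) \le k_{\mathbb{D}}(0,\zeta) + R$, and one takes $\rho$ so close to $1$ that $k_{\mathbb{D}}(0,\rho) < \epsilon$, whence $\varphi(\mathbb{D}_\rho) \subset B_M(p, R+\epsilon) \subset B_M(p, 3R+\epsilon)$ with room to spare (the factor $3$ is in fact more slack than the infinitesimal argument needs, and is presumably included because the reference \cite{kobayascione} states it in the form convenient for the length-space argument). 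Everything else is bookkeeping with the standard properties of $K_M$, $k_M$, and the Poincaré metric on $\mathbb{D}$.
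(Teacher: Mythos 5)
Note first that the paper does not actually prove this proposition: it is quoted verbatim from Kobayashi's book (Proposition 3.1.19), with only the remark that the constant there depends on $\epsilon$ alone. So your proposal can only be compared with the standard argument, and your core mechanism is indeed that argument: bound $K_{B_M(p,3R+\epsilon)}$ by $\rho^{-1}K_M$ by restricting almost-extremal discs to a sub-disc $\mathbb{D}_\rho$ whose Poincar\'e radius is governed by $\epsilon$, then integrate along an almost-$k_M$-minimizing curve, using Royden's theorem that $k_M$ is the integrated form of $K_M$ and the triangle-inequality observation that such a curve stays within roughly $3R$ of $p$.

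As written, though, the assembly has a genuine gap. You prove the infinitesimal inequality only at points $q\in B_M(p,R)$, with the containment $\varphi(\mathbb{D}_\rho)\subset B_M(p,R+\epsilon)$, but the integration takes place along an almost-geodesic from $x$ to $y$, and that curve in general leaves $B_M(p,R)$: its points are only within about $k_M(p,x)+k_M(x,y)+\delta\le 3R+\delta$ of $p$. The pointwise comparison must be established at \emph{those} points, with the containment $\varphi(\mathbb{D}_\rho)\subset B_M(p,3R+\epsilon)$ --- this is exactly what the factor $3$ and the extra $\epsilon$ in the statement are for, so your remark that the factor $3$ is ``more slack than the infinitesimal argument needs'' is precisely where the reasoning goes astray (the fix is immediate: run the same disc-restriction argument at every point of the curve, splitting $\epsilon$ between the curve's excursion beyond $3R$ and the Poincar\'e radius of $\mathbb{D}_\rho$). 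Two further slips: you cannot choose ``$\rho$ close to $1$ with $k_{\mathbb{D}}(0,\rho)<\epsilon$'', since $k_{\mathbb{D}}(0,\rho)\to\infty$ as $\rho\to1$; one must take $\rho$ comparable to $\epsilon$, so $C=\rho^{-1}$ is large for small $\epsilon$ (which is still consistent with the statement). And your motivating first paragraph is internally inconsistent: the naive curve argument does not give the inequality with $C=1$ (passing to the subdomain changes the infinitesimal metric along the curve, which is the whole point), and the claim that $k_M$ ``need not be an inner distance'' contradicts Royden's theorem, which you later correctly invoke when integrating $K_M$.
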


The fact that $C$ depends only on $\epsilon$ is not stated explicitly in \cite[Proposition 3.1.19]{kobayascione}, but it is clear from the (first paragraph of the) proof. We will actually use this result in the following simplified form. 
\begin{cor}\label{corbilip}
Let $M$ be a Kobayashi hyperbolic complex manifold. Then there exists an absolute constant $C\geq 1$ such that
$$k_{B_M(p,4R)}(x,y)\leq C k_M(x,y), \quad \forall x,y\in B_M(p,R),$$
for all $R\geq 1$ and all $p\in M.$
\end{cor}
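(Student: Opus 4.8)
The plan is to deduce Corollary \ref{corbilip} directly from Proposition \ref{lem:bilipschitz} by a careful choice of the parameter $\epsilon$. The point is that Proposition \ref{lem:bilipschitz} gives, for every $R,\epsilon>0$, a constant $C=C(\epsilon)\geq 1$ valid for \emph{all} $p\in M$, with the inclusion of balls $B_M(p,R)\subset B_M(p,3R+\epsilon)$ in the background. To match the statement of the Corollary, I want to replace the radius $3R+\epsilon$ by $4R$; since we are told $R\geq 1$, taking $\epsilon=R$ would already give $3R+\epsilon=4R$, but then $\epsilon$ varies with $R$ and so does the constant $C(\epsilon)=C(R)$, which is not what we want. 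Instead I would simply fix $\epsilon=1$ once and for all, obtaining an absolute constant $C=C(1)\geq 1$ such that
$$k_{B_M(p,3R+1)}(x,y)\leq C\, k_M(x,y), \quad \forall x,y\in B_M(p,R),$$
for every $R>0$ and every $p\in M$.

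The remaining step is a monotonicity observation: whenever $R\geq 1$ we have $3R+1\leq 4R$, hence $B_M(p,3R+1)\subseteq B_M(p,4R)$, and the Kobayashi pseudodistance is contracted by inclusions of domains, i.e. $k_{B_M(p,4R)}\leq k_{B_M(p,3R+1)}$ pointwise on $B_M(p,3R+1)\times B_M(p,3R+1)$, and in particular on $B_M(p,R)\times B_M(p,R)$. Chaining this with the previous inequality yields
$$k_{B_M(p,4R)}(x,y)\leq k_{B_M(p,3R+1)}(x,y)\leq C\, k_M(x,y),\quad \forall x,y\in B_M(p,R),$$
for all $R\geq 1$ and all $p\in M$, which is exactly the claim, with $C$ the absolute constant $C(1)$ from Proposition \ref{lem:bilipschitz}.

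There is essentially no obstacle here: the only thing to be slightly careful about is that the constant in Proposition \ref{lem:bilipschitz} depends on $\epsilon$ but not on $p$ or $R$, as remarked in the paragraph following that proposition, so freezing $\epsilon=1$ genuinely produces an absolute constant. The use of $R\geq 1$ is precisely what lets us absorb the additive $\epsilon=1$ into the multiplicative enlargement from $3R$ to $4R$. (One could equally fix any $\epsilon\in(0,1]$; the choice $\epsilon=1$ is just the cleanest.)
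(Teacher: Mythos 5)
Your argument is correct and is precisely the (implicit) deduction the paper intends: apply Proposition \ref{lem:bilipschitz} with $\epsilon=1$ fixed, so that $C=C(1)$ is absolute, and then use $3R+1\leq 4R$ for $R\geq 1$ together with the monotonicity of the Kobayashi distance under the inclusion $B_M(p,3R+1)\subseteq B_M(p,4R)$. The paper gives no separate proof of the corollary, so there is nothing further to compare.
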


We introduce the following definition.
\begin{dfn}
Let $\pi:E\rightarrow X$ be a holomorphic fiber bundle and $z\in X$. Then define\[
r(z):=\sup\{r>0\colon \ \text{ the bundle trivializes over } B_X(z,r)\}
\]
\end{dfn}
Notice that $r(z)>0$ for every $z\in X$ if $X$ is Kobayashi hyperbolic.

We can now prove the main result of this section. Recall \cite[Theorem 3.1.9]{kobayascione}  that if $X$ and $Y$ are two complex manifolds then
\begin{equation}\label{product}k_{X\times Y}((z_1,w_1),(z_2,w_2))=\max\left\{k_X(z_1,z_2),k_Y(w_1,w_2)\right\},\quad z_1,z_2\in X,w_1,w_2\in Y.\end{equation}

\begin{thm}\label{lem:bundles_not_gromov} 
	Let $X,F$ be non-compact complete Kobayashi hyperbolic complex manifolds.
	Let $\pi:E\rightarrow X$ be a holomorphic fiber bundle with fiber $F$ and such that $\sup_{z\in X}r(z)=+\infty$. Then $(E,k_E)$ is not Gromov hyperbolic. 
\end{thm}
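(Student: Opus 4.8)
The plan is to exploit the product formula \eqref{product} on large trivializing balls and to build, inside $E$, a sequence of quasigeodesic triangles whose slimness constants blow up. The key heuristic is that a product $X \times F$ of two non-compact complete Kobayashi hyperbolic manifolds is never Gromov hyperbolic: taking geodesic rays $\sigma_X$ in $X$ and $\sigma_F$ in $F$ going to infinity, the four points $(\sigma_X(0),\sigma_F(0))$, $(\sigma_X(T),\sigma_F(0))$, $(\sigma_X(0),\sigma_F(T))$, $(\sigma_X(T),\sigma_F(T))$ span a "square" on which the max-metric behaves like the $\ell^\infty$ metric on $\mathbb{R}^2$, whose Gromov product inequality fails with defect $\sim T/2$. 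The point $(\sigma_X(T/2),\sigma_F(T/2))$ on the "diagonal" quasigeodesic is at distance $\sim T/2$ from the union of the two "sides" through the far corners. So first I would make this elementary fact precise: in $X\times F$ with the max-metric, there is a $(1,0)$-quasigeodesic triangle (indeed we can even take honest geodesics if $X,F$ are geodesic, but quasigeodesics suffice and are cheaper) that is not $M$-slim for $M < T/2 - O(1)$, and $T$ can be taken arbitrarily large since $X$ and $F$ are non-compact and complete (hence contain points arbitrarily far from a basepoint, and, being complete and — by Hopf–Rinow-type arguments for the length structure, or just by the definition via chains — one can join them by quasigeodesics with uniform constants; in fact for our purposes we only need the existence of points realizing large distances and the triangle inequality).

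The second step is to transfer this configuration into $E$. Pick $z_0\in X$ with $r(z_0) > 10T$ (possible since $\sup_z r(z) = +\infty$); over $B_X(z_0, r(z_0))$ the bundle is trivial, so there is a biholomorphism $\Phi : \pi^{-1}(B_X(z_0,10T)) \to B_X(z_0,10T) \times F$. Inside $B_X(z_0, T) \times F$ place the four corner points of the square above, with the $X$-coordinates within distance $T$ of $z_0$ and the $F$-coordinates spread out by distance $T$. Using \eqref{product}, the Kobayashi distance \emph{of the trivializing neighbourhood} $B_X(z_0,10T)\times F$ between any two of these points, and along the connecting quasigeodesics, is exactly the max of the $X$- and $F$-distances — so inside this neighbourhood we have a genuinely non-slim quasigeodesic configuration. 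Finally, apply Corollary \ref{corbilip}: for $R = T \ge 1$, on $B_E(p, R)$ (with $p$ a basepoint in the fiber over $z_0$) the metrics $k_E$ and $k_{B_E(p,4R)}$ are biLipschitz with an \emph{absolute} constant $C$; and $B_E(p,4R) \subset \pi^{-1}(B_X(z_0,10T))$ once $T$ is large enough compared to the fiber geometry — more carefully, I would instead run Corollary \ref{corbilip} with the trivializing product $B_X(z_0,10T)\times F$ in the role of $M$, so that $k_E \ge C^{-1} k_{B_X(z_0,10T)\times F} = C^{-1}\max\{k_X, k_F\}$ on a ball of radius $\sim T$ around $p$, while trivially $k_E \le k_{B_X(z_0,10T)\times F}$. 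Thus on this ball $k_E$ is comparable, with absolute constants, to the product metric, and the non-slim triangle survives (its corners being $O(CT)$-quasigeodesically connected and its midpoint being $\gtrsim C^{-1}T$ from the other two sides). Since $T$ is arbitrary and $C$ absolute, no $\delta$ works, so $(E,k_E)$ is not Gromov hyperbolic.

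I would organize the write-up as: (i) a lemma that the $\ell^\infty$-product of two metric spaces each containing points at arbitrarily large distance from a basepoint is not Gromov hyperbolic, quantified via a non-slim quasigeodesic (or even geodesic) square; (ii) the observation that $k_E$ restricted to a large trivializing ball, compared via \eqref{product} and Corollary \ref{corbilip}, is biLipschitz-equivalent with absolute constants to such an $\ell^\infty$-product metric; (iii) combine, letting the trivializing radius $\to\infty$.

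The main obstacle I anticipate is bookkeeping the \emph{uniformity} of constants: one must ensure that the biLipschitz constant from Corollary \ref{corbilip} is genuinely absolute (independent of $T$), that the quasigeodesic constants of the square's sides do not degrade with $T$ (they don't — the sides are products of a geodesic segment with a constant, so in the max-metric they are isometric embeddings of intervals, hence $(1,0)$-quasigeodesics, and biLipschitz distortion only changes them to $(C,0)$-quasigeodesics), and that the connecting segments and the midpoint genuinely lie in the ball $B_E(p, R)$ of radius $R\sim T$ on which the comparison is valid — which forces a little care in choosing the square slightly inside, say with corners within distance $T/4$ of $p$ and taking $R = T/2$. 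Everything else is routine, but getting these inclusions and constants to line up cleanly is where the proof has to be written carefully.
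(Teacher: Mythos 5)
Your proposal is correct and takes essentially the same route as the paper's proof: trivialize over Kobayashi balls $B_X(z_n,r_n)$ with $r_n\to\infty$, use the product formula \eqref{product} to build a quasigeodesic configuration whose far corner lies at distance comparable to $r_n$ from the opposite sides, and transfer it to $E$ with the absolute constant of Corollary \ref{corbilip} to contradict uniform slimness of quasigeodesic triangles. Your ``square'' is precisely the paper's triangle, whose third side $c_n$ passes through the far corner $(\gamma_n(t_n),\sigma_n(t_n))$.
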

\begin{proof} 
We will construct a sequence $\{T_n\}_n$ of  quasigeodesic triangles in $E$ violating the definition of Gromov hyperbolicity. Let $\{z_n\}_n$ in $X$ be such that $r_n:=r(z_n)\to+\infty$. We define 
$$\Omega_n:=\pi^{-1}(B_X(z_n,r_n/2)),$$ and let 
$$\Psi_n\colon B_X(z_n,r_n/2)\times F\to \Omega_n$$ be a holomorphic trivialization. Let $q\in F$ be  any point of $F$. Let $C\geq 1$ be the universal constant given by Corollary \ref{corbilip}. Set $t_n:=\frac{r_n}{16C}$. 

We construct the triangles in the following way. Since $X$ and $F$ are non-compact, for all $n>0$ we can find a geodesic of $X$ denoted $\gamma_n:[0,t_n]\rightarrow X$ with $\gamma_n(0)=z_n$, and a geodesic of $F$ denoted $\sigma_n:[0,t_n]\rightarrow F$ with $\sigma_n(0)=q$. Notice that $\gamma_n([0,t_n])\subset B_X(z_n,r_n/8)$, so by Corollary \ref{corbilip} the curve $\gamma_n$ is a $(C,0)$-quasigeodesic w.r.t.~the Kobayashi distance of $B_X(z_n,r_n/2)$ (we may assume that $r_n\geq 8$ for every $n$).

By \eqref{product} the curves  $a_n(t)=(z_n,\sigma_n(t))$ and $b_n(t)=(\gamma_n(t),q)$ are respectively  a geodesic and $(C,0)$-quasigeodesic of $B_X(z_n,r_n/2)\times F$. Moreover a simple computation shows that the curve  $c_n:[0,2t_n]\rightarrow B_X(z_n,r_n/2)\times F$ defined by 
$$c_n(t)=\begin{cases}(\gamma_n(t),\sigma_n(t_n)) &\mbox{if}\ t\in[0,t_n]\\(\gamma_n(t_n),\sigma_n(2t_n-t))&\mbox{if}\ t\in[t_n,2t_n].\end{cases}$$
 is a $(2C,0)$-quasigeodesic of $B_X(z_n,r_n/2)\times F$. Indeed, $c_n$ is a geodesic w.r.t. the distance $k_X+k_F$ that is $2C$-BiLipschitz to $k_{B_X(z_n,r_n/2)\times F}$ in $B_X(z_n,r_n/8)\times F$. Hence the triangle $T_n$ with sides $a_n, b_n$ and $c_n$  is a $(2C,0)$-quasigeodesic triangle in $B_X(z_n,r_n/2)\times F$. Notice $T_n$ is not $t_n$-slim  because $$k_{B_X(z_n,r_n/2)\times F}(c_n(t_n),a_n([0,t_n])\cup b_n([0,t_n]))=t_n.$$ 
 Now since $\Psi_n$ is a biholomorphism between $B_X(z_n,r_n/2)\times F$ and $\Omega_n$,  the triangle $\hat{T}_n$ in $\Omega_n$ that is image of $T_n$ via $\Psi_n$ is again a $(2C,0)$ quasi-geodesic triangle w.r.t. $k_{\Omega_n}$, and it is not $t_n$-slim.
 \begin{figure}[ht]
	\begin{tikzpicture}[scale=2,  mydot/.style={circle, fill=black, draw, outer sep=0pt,inner sep=1.3pt}]
	\draw[line width=.5mm] (-2,-2)--node[below] {$B_X(z_n,r_n/2)$}(2,-2);
	\draw[line width=.5mm] (-2,-2)--node[left] {$F$}(-2,0.5);
	\draw[line width=.5mm] (2,-2)--(2,0.5);
	
	\draw[line width=.5mm] (0,-1.25)--node[above] {$b_n$}(1.25,-1.25);
	\draw[line width=.5mm] (0,-1.25)--node[right] {$a_n$}(0,0);
	\draw[line width=.5mm] (1.25,-1.25)--(1.25,0);
	\draw[line width=.5mm] (0,0)--(1.25,0) node[below left] {$c_n$};
	
	\draw[very thin, dashed] (0,-1.25)--(0,-2) node [above right] {$z_n$}; 
	\draw[very thin, dashed] (1.25,-1.25)--(1.25,-2) node [above right] {$\gamma_n(t_n)$}; 
	
	\draw[very thin, dashed] (1.25,-1.25)--(2,-1.25) node [above right] {$q$}; 
	\draw[very thin, dashed] (1.25,0)--(2,0) node [above right] {$\sigma_n(t_n)$}; 
	
	\node at (-2,0.7) {$\vdots$};
	\node at (2,0.7) {$\vdots$};

	\node[mydot]  at (0,-2) {};
	\node[mydot]  at (1.25,-2) {};
	\node[mydot]  at (2,-1.25) {};
	\node[mydot]  at (2,0) {};
	
	\node[mydot]  at (0,0) {};
	\node[mydot]  at (0,-1.25) {};
	\node[mydot]  at (1.25,-1.25) {};

	\end{tikzpicture}
	\caption{The triangle $T_n$ in $B_X(z_n,r_n/2)\times F$.}
\end{figure}
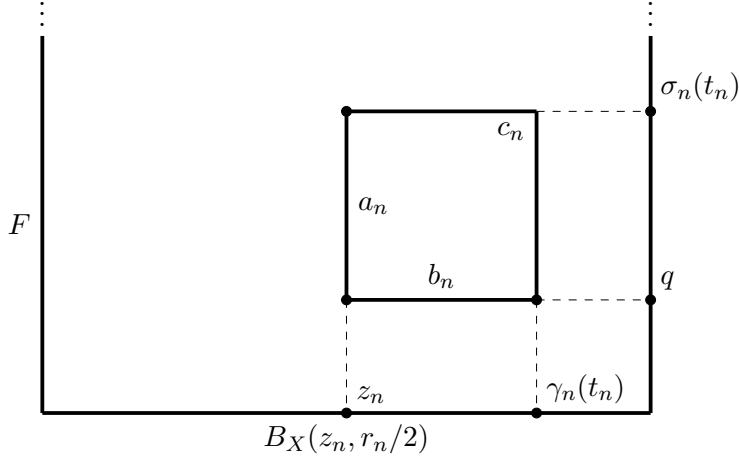

Now the map $\pi\colon E\to X$ is non-expanding,
so  $$ B_E(\Psi(z_n,q),r_n/2)\subset \Omega_n.$$
The triangle $\hat{T}_n$ is contained in $B_{\Omega_n}(\Psi(z_n,q),r_n/8)$, and hence it is contained in $B_E(\Psi(z_n,q),r_n/8)$. By another application of Corollary \ref{corbilip}, the distances $k_E$ and $k_{\Omega_n}$ are $C$-BiLipschitz in $B_E(\Psi(z_n,q),r_n/8)$, so $\hat{T}_n$ is a $(2C^2,0)$-quasigeodesic triangle not $(C^{-1}t_n)$-slim w.r.t.~the distance $k_E$. It follows that  $E$ is not Gromov hyperbolic.
\end{proof}

We conclude this section highlighting an interesting class of holomorphic fiber bundles satisfying the condition $\sup_X r=+\infty$.

\begin{prop}
Let $Y$ be a complex manifold  and let $\pi:E\rightarrow Y$ be a holomorphic fiber bundle.
Let  $X\subset Y$ be a  domain. Assume that there exists a point $\xi\in \partial X$ which admits a local holomorphic peak function. 
Then the restricted holomorphic bundle $E|_X$  has the property $\sup_X r=+\infty$.
\end{prop}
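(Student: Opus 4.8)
The plan is to reduce the statement to the quantitative estimate furnished by Lemma \ref{gausstaut}, applied fiberwise after trivializing the bundle over a small ball in $Y$ containing $\xi$ in its closure. Concretely, since $\xi\in\partial X$ admits a local holomorphic peak function, I would first pick a neighborhood $U$ of $\xi$ in $Y$ small enough that $E$ trivializes over $U$, say via a biholomorphism $\Psi\colon U\times F\to E|_U$ (possible because $E$ is a holomorphic fiber bundle, so local trivializations exist, and we may shrink $U$); here $F$ is the fiber. Shrinking $U$ further I may assume the peak function is defined on all of $U$.

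Next I would take a sequence $z_n\in X\cap U$ with $z_n\to\xi$ and show that $r(z_n)\to+\infty$, where $r$ is computed for the bundle $E|_X$. The point is that $E|_X$ trivializes over any ball $B_X(z_n,\rho)$ that is contained in $U$ (it is the restriction of the trivialization $\Psi$). So it suffices to show that for every $\rho>0$ there is $n$ with $B_X(z_n,\rho)\subset U$; equivalently, that $k_X(z_n, X\setminus U)\to+\infty$. This is exactly the content of Lemma \ref{gausstaut}: taking $D=X$ and the neighborhood $U$ of $\xi$, the existence of a local holomorphic peak function at $\xi$ gives $\lim_n k_X(z_n, X\cap U^c)=+\infty$. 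Hence for each fixed $\rho$, eventually $B_X(z_n,\rho)\subset X\cap U$, so $E|_X$ trivializes over $B_X(z_n,\rho)$, whence $r(z_n)\ge\rho$ for large $n$. Since $\rho$ is arbitrary, $\sup_X r\ge \sup_n r(z_n)=+\infty$.

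One technical point to get right is that Lemma \ref{gausstaut} as stated is for domains $D\subset\C^d$, whereas here $X$ is a domain in an arbitrary complex manifold $Y$. I would address this either by noting that the peak-function argument of \cite{gaussier-taut} is purely local and applies verbatim to domains in manifolds, or, if one prefers to invoke the lemma as a black box, by working in a coordinate chart of $Y$ around $\xi$: intersecting with a chart biholomorphic to an open subset of $\C^d$ reduces the situation to the stated form, since the conclusion $k_X(z_n, X\cap U^c)\to+\infty$ only concerns the behavior of $z_n$ near $\xi$ and a neighborhood basis there. Either way this is a routine adaptation.

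I expect the only real content—and the main (mild) obstacle—to be the bookkeeping that ties together the three notions involved: the local peak function at $\xi$, the blow-up of the Kobayashi distance $k_X(z_n, X\cap U^c)$ it forces, and the definition of $r(z)$ for the restricted bundle. Once one observes that $E|_X$ automatically trivializes over any metric ball that stays inside a trivializing neighborhood $U$ of $\xi$ in $Y$, the proof is essentially immediate. No delicate geometry of $\partial X$ or of $E$ is needed beyond the existence of the peak function.
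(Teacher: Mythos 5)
Your proposal is correct and follows essentially the same route as the paper: fix a neighborhood $U$ of $\xi$ over which $E$ trivializes, apply Lemma \ref{gausstaut} with $D=X$ to get $k_X(z_n, X\cap U^c)\to+\infty$ for a sequence $z_n\to\xi$, and conclude that the balls $B_X(z_n,R)$ eventually lie in $X\cap U$, so $r(z_n)\to+\infty$. The only difference is your explicit remark about adapting the lemma from domains in $\C^d$ to domains in a manifold, a point the paper passes over silently and which is indeed routine.
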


\begin{proof}
Let $U$ be an open  neighborhood of $\xi$ in $Y$  such that $\pi:E\rightarrow Y$ trivializes over $U$.  Let $\{z_n\}_n$ be a sequence in $X$ converging to $\xi$. 
By Lemma \ref{gausstaut} we have that
$$\lim_{n\to+\infty}k_X(z_n,X\cap U^c)=+\infty.$$
Hence for each $R>0$ we have $B_X(z_n,R)\subset X\cap U$ for $n$ large enough, which implies that $r(z_n)\rightarrow +\infty.$
\end{proof}

\begin{cor}\label{winnotg}
The pre-Worms $W_{\mathrm{in}}$ and $W_{\mathrm{out}}$ are not Gromov hyperbolic w.r.t.~its Kobayashi distance.
\end{cor}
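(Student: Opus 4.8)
The plan is to deduce Corollary \ref{winnotg} directly from the preceding Proposition combined with Theorem \ref{lem:bundles_not_gromov}. The key point is that both $W_{\mathrm{in}}=Z(X_{\mathrm{in}},\theta|_{X_{\mathrm{in}}})$ and $W_{\mathrm{out}}=Z(X_{\mathrm{out}},\theta|_{X_{\mathrm{out}}})$ are, by Proposition \ref{prp:hol_fiber_bundle}, holomorphic fiber bundles with base the smoothly bounded domains $X_{\mathrm{in}}\subset\subset Y$ (resp. $X_{\mathrm{out}}\subset\subset Y$) and fiber the right half-plane $\{\Re\zeta>0\}$, which is biholomorphic to the disc and hence a non-compact complete Kobayashi hyperbolic manifold. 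Thus the fiber hypothesis of Theorem \ref{lem:bundles_not_gromov} is satisfied automatically; what remains is to check that the base is non-compact complete Kobayashi hyperbolic and that $\sup_{z}r(z)=+\infty$.

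First I would record that $X_{\mathrm{in}}$ and $X_{\mathrm{out}}$ are bounded domains in the Riemann surface $Y$ (indeed relatively compact, by the assumption that $\theta^{-1}(J)$ is compact and Remark \ref{X_is_smoothly_bounded}), so they are non-compact; and being relatively compact smoothly bounded domains they are complete Kobayashi hyperbolic — alternatively one can simply invoke Proposition \ref{prop:bundles_complete} applied to the trivial one-point fiber, or note directly that a relatively compact domain in a hyperbolic Riemann surface is complete hyperbolic. (If a cleaner reference is wanted, completeness of $X_{\mathrm{out}}$ was already used in the proof of Proposition \ref{kobcompl}.) Then I would apply the preceding Proposition with $E$ the trivial bundle $Y\times(\text{point})$ — or more honestly, with $E=W_{\mathrm{in}}$ itself viewed as a bundle over $Y$ after extending $\theta$, but the cleanest route is: the preceding Proposition only needs a boundary point of $X\subset Y$ admitting a local holomorphic peak function, and it concludes $\sup_X r=+\infty$ for the restricted bundle.

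The one genuine thing to verify is the existence of a boundary point of $X_{\mathrm{in}}$ (and of $X_{\mathrm{out}}$) admitting a local holomorphic peak function in $Y$. Since $X_{\mathrm{in}}=\theta^{-1}(I^\circ)$ is smoothly bounded in $Y$ and $\theta$ has no critical points on $\theta^{-1}(\partial I)$, the boundary $\partial X_{\mathrm{in}}$ is a smooth real hypersurface in the Riemann surface $Y$; a smooth real curve in a Riemann surface is automatically "strongly pseudoconvex" from either side in the trivial one-dimensional sense, and more concretely, near a boundary point $\xi$ one can take a holomorphic coordinate in which $X_{\mathrm{in}}$ lies locally on one side of a smooth arc, and then build a peak function (e.g. using that a $C^2$ hypersurface in $\C$ is locally biholomorphic to a domain with $C^2$ boundary satisfying an exterior-disc-type condition, giving a local holomorphic peak function by the classical one-variable construction $\zeta\mapsto 1/(\zeta-a)$ type barriers, or simply by uniformizing the local side as a disc). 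I expect this verification of the local peak function at a smooth boundary point of a domain in a Riemann surface to be the only place requiring a small argument; everything else is a direct citation of Theorem \ref{lem:bundles_not_gromov} and the preceding Proposition. Once $\sup r=+\infty$ is in hand, Theorem \ref{lem:bundles_not_gromov} applies verbatim to $E=W_{\mathrm{in}}$ and $E=W_{\mathrm{out}}$, yielding that neither is Gromov hyperbolic.
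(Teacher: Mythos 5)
Your argument is correct and is essentially the paper's proof: smooth boundedness of $X_{\mathrm{in}}$ and $X_{\mathrm{out}}$ gives local holomorphic peak functions at their boundary points, the preceding Proposition then yields $\sup r=+\infty$, and Theorem \ref{lem:bundles_not_gromov} (with fiber the half-plane and base the complete hyperbolic surfaces $X_{\mathrm{in}}$, $X_{\mathrm{out}}$) gives the conclusion. One small simplification: there is no need to extend $\theta$ or to consider a point-fiber bundle, since the ambient bundle in the Proposition can be taken to be $Z(Y,\theta)$ itself, which is already a holomorphic fiber bundle over all of $Y$ because $\theta$ is harmonic on $Y$, and $W_{\mathrm{in}}$, $W_{\mathrm{out}}$ are exactly its restrictions to $X_{\mathrm{in}}$, $X_{\mathrm{out}}$.
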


\begin{proof}
The  domains $$X_{\mathrm{in}}\subset \subset X_{\mathrm{out}}\subset \subset Y$$ are smoothly bounded (see Remark \ref{X_is_smoothly_bounded}), and thus every point in their boundary admits a local holomorphic peak function. Hence by the previous proposition 
the pre-Worms $W_{\mathrm{in}}$ and $W_{\mathrm{out}}$ satisfy $\sup_X r=+\infty$ and Theorem \ref{lem:bundles_not_gromov} yields the result.
\end{proof}

\section{Barrett's scaling and proof of main theorem}

In what follows  we denote by $TM$ the holomorphic tangent bundle of a complex manifold $M$ and by $\pi:TM\rightarrow M$ the canonical projection. We denote by $\D\subset \C$ the unit disc. Recall the following classical definition.
\begin{dfn}Let $M$ be a complex manifold and let $X\subset M$ be a domain. Then $X$ has {\sl simple boundary} in $M$ if for all $\phi:\D\rightarrow M$  holomorphic mappings with $\phi(\D)\subset \overline{X}$ and $\phi(\D)\cap \partial X\neq \varnothing$ one has $\phi(\D)\subseteq \partial X$.
\end{dfn}

The proof of Theorem \ref{main} is based on the following result, showing the stability of the Kobayashi distance and of the Kobayashi--Royden metric under a  particular type of convergence of domains $D_n\to D_\infty$.
\begin{prop}
\label{lem:continuity_metric}
Let $M$ be a taut complex manifold and let $\{ D_n\}_n$ be a sequence of domains of $M$. 
Let  $D_\infty\subset M$ be a complete Kobayashi hyperbolic domain with simple boundary. Assume that 
\begin{itemize}
  \item[i)] if $\{ x_n\}_n$ is a sequence converging to $x_\infty\in M$ and $x_n\in D_n$ for all $n\in \N$, then $x_\infty\in \overline{D}_\infty$;
  \item[ii)] for every compact $H\subset D_\infty$, there exists $N$ such that $H\subset D_n$ for $n\geq N$.
\end{itemize}
Then as $ n\rightarrow +\infty$ we have 
$K_{D_n}\rightarrow K_{D_\infty}$ uniformly on compact subsets of  $TD_\infty$, and
$k_{D_n}\rightarrow k_{D_\infty}$ uniformly on compact subsets of $D_\infty\times D_\infty$.
\end{prop}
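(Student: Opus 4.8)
The plan is to establish the two convergence statements separately, deriving the pointwise convergence of metrics/distances from standard normal-family arguments and then upgrading to uniformity on compacta. I would start with the Kobayashi--Royden metric $K_{D_n} \to K_{D_\infty}$. Fix a compact set $L \subset TD_\infty$ and a point $(x,v) \in L$. The inequality $\limsup_n K_{D_n}(x,v) \le K_{D_\infty}(x,v)$ is the easy direction: given an extremal (or near-extremal) disc $\phi:\D \to D_\infty$ for $K_{D_\infty}(x,v)$, by assumption ii) the image $\phi(r\D)$ of a slightly smaller disc lies in $D_n$ for $n$ large, and rescaling $\phi$ by $r$ gives a competitor for $K_{D_n}$, so $\limsup_n K_{D_n}(x,v) \le \tfrac1r K_{D_\infty}(x,v)$ for every $r<1$. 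The reverse inequality $\liminf_n K_{D_n}(x,v) \ge K_{D_\infty}(x,v)$ is where the hypotheses on $M$ and $D_\infty$ (tautness, simple boundary, completeness) enter. Take near-extremal discs $\phi_n : \D \to D_n \subset M$ with $\phi_n(0)=x$, $\phi_n'(0) = \lambda_n v$, $\lambda_n > 0$, and $1/\lambda_n$ close to $K_{D_n}(x,v)$; since $K_{D_n}(x,v)$ is bounded (by the easy direction, and because $K_M \le K_{D_n}$ bounds from below), the $\lambda_n$ stay bounded away from $0$ and $\infty$. By tautness of $M$, a subsequence of $\phi_n$ converges locally uniformly to some holomorphic $\phi_\infty : \D \to M$ (the compactly-divergent alternative is excluded because $\phi_n(0)=x$ stays put). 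By hypothesis i), $\phi_\infty(\D) \subset \overline{D}_\infty$. Now the simple boundary condition forces the dichotomy: either $\phi_\infty(\D) \subset D_\infty$, or $\phi_\infty(\D) \subset \partial D_\infty$; the latter is impossible since $\phi_\infty(0) = x \in D_\infty$. Hence $\phi_\infty$ is a competitor for $K_{D_\infty}(x, \phi_\infty'(0))$ with $\phi_\infty'(0) = (\lim \lambda_n) v$, giving the lower bound. This settles pointwise convergence.

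Next I would promote pointwise convergence to locally uniform convergence. The clean way is to invoke equicontinuity: the functions $K_{D_n}$ are each upper semicontinuous, and more usefully, since $D_\infty$ is complete hyperbolic, on a neighborhood of any compact $L \subset TD_\infty$ one has uniform two-sided bounds $c \le K_{D_n} \le C$ for $n$ large (upper bound from the easy direction plus a covering argument using ii); lower bound from $K_{D_n} \ge K_M$ and tautness-hyperbolicity of a relatively compact piece of $D_\infty$). Combined with the pointwise convergence and a Dini-type / subsequence argument — if uniform convergence failed there would be $(x_n, v_n) \to (x_\infty,v_\infty)$ in $L$ with $|K_{D_n}(x_n,v_n) - K_{D_\infty}(x_n,v_n)|$ bounded below, and the same normal-families extraction as above applied at the moving points yields a contradiction — one gets uniform convergence on $L$. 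Here it is cleanest to run the whole argument above with sequences $(x_n, v_n)$ converging inside $TD_\infty$ rather than a fixed $(x,v)$; the proof goes through verbatim because $x_n \to x_\infty \in D_\infty$ keeps the limiting disc from escaping to the boundary, and $v_n \to v_\infty$ controls the derivative normalization.

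For the Kobayashi distance, $k_{D_n} \to k_{D_\infty}$ uniformly on compact subsets of $D_\infty \times D_\infty$, one route is to integrate: since $k_{D_n}$ is the integrated form of $K_{D_n}$ (this holds for domains in complex manifolds) and $K_{D_n} \to K_{D_\infty}$ uniformly on compacta, one expects $k_{D_n} \to k_{D_\infty}$. The $\limsup$ direction is immediate: for $p,q$ in a compact $H \subset D_\infty$, join them by a path in $D_\infty$ (using connectedness and, if needed, completeness to keep the path in a fixed compact set), and estimate its $K_{D_n}$-length by its $K_{D_\infty}$-length plus $o(1)$ via uniform convergence — this gives $\limsup_n k_{D_n}(p,q) \le k_{D_\infty}(p,q)$, uniformly. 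The $\liminf$ direction, $\liminf_n k_{D_n}(p,q) \ge k_{D_\infty}(p,q)$, is the subtler half, since a priori the $k_{D_n}$-geodesic (or near-geodesic path) from $p$ to $q$ might wander near $\partial D_\infty$ where we have no control. I would handle it by first using the completeness and tautness of $D_\infty$ to show the relevant competitor paths can be confined: if $k_{D_n}(p,q)$ stayed bounded along a subsequence, a near-minimizing path stays in a $k_{D_n}$-ball of bounded radius around $p$; by the uniform lower bound $k_{D_n} \ge k_{M}$ near $H$... actually more robustly, one can bypass paths entirely and argue with analytic discs / chains of discs defining $k_{D_n}$, extracting limits by tautness exactly as in the metric case and using i) and the simple boundary property of $D_\infty$ to confine the limit chain to $D_\infty$. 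Then the uniform two-sided argument (if uniformity failed, extract $(p_n, q_n) \to (p_\infty, q_\infty)$ in $H \times H$ realizing a persistent gap, and contradict via normal families) closes the proof.

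The main obstacle, in my estimation, is the lower-bound half for $k_{D_n}$: controlling near-geodesics or near-extremal disc-chains for $k_{D_n}$ so that their limit does not slip onto $\partial D_\infty$. This is precisely what the completeness of $D_\infty$ together with its simple boundary in the taut ambient $M$ are designed to rule out — completeness prevents a minimizing sequence from escaping to the boundary at bounded distance, and simple boundary forces any limiting holomorphic disc touching $\partial D_\infty$ to lie entirely in $\partial D_\infty$, which is incompatible with having an endpoint in $D_\infty$. Everything else (the easy $\limsup$ inequalities, integration of the metric, the upgrade from pointwise to uniform via compactness and normal families) is routine once this confinement is in place.
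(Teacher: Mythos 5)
Your treatment of the Kobayashi--Royden metric (easy direction by shrinking extremal discs of $D_\infty$ and using (ii); hard direction by normal families in the taut ambient $M$, hypothesis (i), and the simple-boundary dichotomy; uniformity via a moving-point contradiction) is correct and is essentially the paper's argument (its Lemmas on $K_{D_n}\leq(1+\epsilon)K_{D_\infty}$ and $K_{D_\infty}\leq(1+\epsilon)K_{D_n}$), as is your upper bound $\limsup_n k_{D_n}(p,q)\leq k_{D_\infty}(p,q)$ by integrating the metric along a near-minimizing curve of $D_\infty$ confined in a closed $k_{D_\infty}$-ball (compact by completeness). The genuine gap is exactly where you sensed it: the lower bound for the distance. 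Your first mechanism fails because a near-minimizing curve for $k_{D_n}$ lives in $D_n$, and a bounded $k_{D_n}$-ball around $p$ is a subset of $D_n$, not of (a compact part of) $D_\infty$; completeness of $D_\infty$ says nothing about such curves, and in the intended application $D_n=\Barr_n(W)$ contains many points far from $D_\infty=W_{\mathrm{in}}$. Your fallback via chains of analytic discs also does not close: the number of discs in a near-optimal chain is not uniformly bounded, so there is no well-defined ``limit chain''; and even disc-by-disc, hypothesis (i) only places limit discs in $\overline{D}_\infty$, while the simple-boundary property excludes only discs meeting \emph{both} $D_\infty$ and $\partial D_\infty$ --- an intermediate limit disc may lie entirely in $\partial D_\infty$, so the limiting object need not stay in $D_\infty$, need not connect $p$ to $q$ there, and yields no comparison with $k_{D_\infty}(p,q)$.

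The missing idea is the paper's quantitative exit-time truncation, which bypasses all limit-of-chain issues. Fix $o\in H$, set $R:=\mathrm{diam}_{k_{D_\infty}}(H)$, take a curve $\gamma_n$ in $D_n$ joining $p,q$ with $\ell_{D_n}(\gamma_n)\leq k_{D_n}(p,q)+\epsilon$, and let $t_n$ be the supremum of times $t$ with $\gamma_n([0,t])\subset B_{D_\infty}(o,2R)$. Either $t_n=1$, or at time $t_n$ one has $k_{D_\infty}(o,\gamma_n(t_n))=2R$, hence $k_{D_\infty}(p,\gamma_n(t_n))\geq 2R-R\geq k_{D_\infty}(p,q)$; in both cases the truncated piece stays in the fixed compact set $\overline{B_{D_\infty}(o,2R)}\subset D_\infty$ (here completeness is used), where the already-proved metric comparison $K_{D_n}\geq(1+\epsilon)^{-1}K_{D_\infty}$ applies, giving $k_{D_n}(p,q)+\epsilon\geq\ell_{D_n}(\gamma_n|_{[0,t_n]})\geq(1+\epsilon)^{-1}k_{D_\infty}(p,\gamma_n(t_n))\geq(1+\epsilon)^{-1}k_{D_\infty}(p,q)$. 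This is uniform in $p,q\in H$ and, together with your upper bound, yields the local uniform convergence of $k_{D_n}$. Without this (or an equivalent confinement device), your sketch of the $\liminf$ inequality does not go through.
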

See, e.g., \cite[Chapter 5]{kobayascione} for the notion of tautness.
The idea of the proof of Proposition \ref{lem:continuity_metric} is similar to \cite[Theorem 4.3]{BGZ}.
 The proof  is based on two lemmas, valid under the assumptions of the proposition.

\begin{lem}\label{lem:lower_continuity}
For every $H\subset D_\infty$ compact and $\epsilon>0$, there exists $N$ such that for all $n\geq N$ and for all $v\in \pi^{-1}(H)$ we have
$$
K_{D_n}(v)\leq (1+\epsilon)K_{D_\infty}(v).
$$
\end{lem}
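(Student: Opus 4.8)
Lemma \ref{lem:lower_continuity} asserts that the Kobayashi--Royden metrics of the $D_n$ are asymptotically bounded above by that of $D_\infty$, uniformly on compact subsets of $TD_\infty$. The plan is to exploit assumption ii) together with the definition of $K_{D_\infty}$ via competing holomorphic discs. Fix a compact $H\subset D_\infty$ and $\epsilon>0$. For each $v\in\pi^{-1}(H)$ write $v=(p,\xi)$ with $p\in H$; by definition of the Kobayashi--Royden metric there is a holomorphic disc $\phi_v:\D\to D_\infty$ with $\phi_v(0)=p$ and $\phi_v'(0)=\lambda_v\xi$ for some $\lambda_v>0$ with $1/\lambda_v\le K_{D_\infty}(v)+\epsilon'$, where $\epsilon'>0$ is chosen small compared to $\epsilon$ (and we may need a lower bound $K_{D_\infty}(v)\ge c>0$ on $\pi^{-1}(H)$, which follows from complete hyperbolicity and compactness).

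Now I would shrink the disc slightly: for $r<1$ close to $1$, the disc $\psi_v(\zeta):=\phi_v(r\zeta)$ still satisfies $\psi_v(0)=p$ and $\psi_v'(0)=r\lambda_v\xi$, and crucially $\psi_v(\overline{\D})=\phi_v(\overline{\D_r})$ is a \emph{compact} subset of $D_\infty$. The key point is to make this work uniformly in $v\in\pi^{-1}(H)$. This is where tautness of $M$ (equivalently, normality of the family of discs into $D_\infty$, which is taut being complete hyperbolic) and compactness of $H$ come in: the set of such discs $\phi_v$ can be taken from a normal family, so the union of the images $\phi_v(\overline{\D_r})$ over all $v$ in a compact set stays in a fixed compact subset $K_r\subset D_\infty$. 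Concretely, one argues by contradiction: if no such uniform $K_r$ existed, one extracts a sequence $v_k$ and points $\phi_{v_k}(\zeta_k)$ with $|\zeta_k|\le r$ escaping every compact set, contradicting the normality/tautness (the limit disc would touch $\partial D_\infty$ and, by the simple boundary hypothesis, be entirely in $\partial D_\infty$, incompatible with $\phi_{v_k}(0)=p_k\in H$).

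With the uniform compact set $K_r\subset D_\infty$ in hand, assumption ii) gives an $N$ such that $K_r\subset D_n$ for all $n\ge N$. Then for $n\ge N$ the disc $\psi_v$ is a competitor for $K_{D_n}(v)$, yielding
$$K_{D_n}(v)\le \frac{1}{r\lambda_v}\le \frac{1}{r}\bigl(K_{D_\infty}(v)+\epsilon'\bigr)\le (1+\epsilon)K_{D_\infty}(v),$$
where the last inequality holds provided $r$ is close enough to $1$ and $\epsilon'$ small enough, using the lower bound $K_{D_\infty}(v)\ge c>0$. Since $r$ and $\epsilon'$ were chosen depending only on $\epsilon$ and $c$ (not on $v$), and $N$ depends only on $K_r$, this is the desired uniform estimate.

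The main obstacle is the uniformity of the compact set $K_r$ over $v\in\pi^{-1}(H)$: one must pass from the pointwise existence of near-optimal discs to a normal family of such discs. I would handle this either by invoking a known "uniform" version of the Kobayashi--Royden definition on taut (hence, here, complete hyperbolic) manifolds — on such manifolds the extremal disc exists and the extremal family over a compact set in $TD_\infty$ is itself relatively compact — or by the contradiction/diagonal argument sketched above, which only uses that $D_\infty$ is taut and has simple boundary in the taut manifold $M$. Everything else (shrinking the disc, applying ii), the final arithmetic) is routine.
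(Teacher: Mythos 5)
Your strategy is essentially the paper's: shrink a (near-)extremal disc by a factor $r$, show that the images $\phi_v(\overline{r\D})$ over all $v$ above $H$ stay in one fixed compact subset of $D_\infty$ via a tautness/normal-families argument (the paper does this by defining $\widehat H$ as the union of $\phi(\overline{r\D})$ over \emph{all} discs with $\phi(0)\in H$ and proving its compactness by exactly the sequence argument you sketch, using that $D_\infty$ is taut because it is complete hyperbolic), and then invoke hypothesis ii) once to put that compact set inside $D_n$ for large $n$. Two remarks. First, your fallback argument does not actually need the simple-boundary hypothesis here: since the centers $\phi_{v_k}(0)$ lie in the compact set $H$, tautness of $D_\infty$ alone rules out compact divergence and gives a limit disc into $D_\infty$; the paper saves tautness of $M$ plus simple boundary for the reverse inequality (Lemma \ref{lem:upper_continuity}). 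Second, and this is the one step of yours that would fail as written: there is no uniform lower bound $K_{D_\infty}(v)\geq c>0$ on all of $\pi^{-1}(H)$, since $\pi^{-1}(H)$ contains vectors of arbitrarily small norm (and the zero vector) and $K_{D_\infty}$ is homogeneous. You must either first reduce to $\|v\|=1$ by homogeneity (the desired inequality is homogeneous in $v$, and on the unit sphere bundle over $H$ the lower bound does hold), or take discs that are near-extremal in the multiplicative sense, or—as the paper does—use that tautness of $D_\infty$ makes the extremal disc exist, so that $K_{D_\infty}(v)\,\phi'(0)=v$ exactly and the estimate $K_{D_n}(v)\leq r^{-1}K_{D_\infty}(v)$ comes out purely multiplicatively, with no additive error and no lower bound needed. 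With that repair your argument is correct and coincides with the paper's proof.
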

\begin{proof}
Set $r:=(1+\epsilon)^{-1}\in(0,1)$. Define $\widehat{H}\subset D_\infty$ as
$$\widehat{H}:=\{\phi(\zeta)|\, \phi:\D\rightarrow D_\infty \text{ holomorphic},\ \phi(0)\in H,\ |\zeta|\leq r \}.$$
The set $\widehat{H}$ is compact. Indeed, let $\{z_n\}_n$ be a sequence in $\widehat{H}$, i.e., there exist $\phi_n:\D\rightarrow D_\infty$ such that $\phi_n(0)\in H$, and $|\zeta_n|\leq r$ such that $z_n=\phi_n(\zeta_n). $
Since $\phi_n(0)\in H$ for all $n\in \mathbb{N}$ and $D_\infty$ is taut (by \cite[Theorem 5.1.3]{kobayascione}), we can assume that $\phi_n$ converges uniformly on compact sets to a holomorphic map $\hat{\phi}:\D\rightarrow D_\infty$ and that  $\zeta_n$ converges to $\hat{\zeta}$ with $|\hat{\zeta}|\leq r$. But then $z_n\to \hat{\phi}(\hat{\zeta})\in\widehat{H}$. This proves that $\widehat{H}$ is compact.

Now, for each $v\in \pi^{-1}(H)$ let $\phi:\D\rightarrow D_\infty$ be such that $\phi(0)=\pi(v)$ and $$K_{D_\infty}(v)\phi'(0)=v.$$ Using property (ii) there exists $N$ such that for all $n\geq N$ we have $\widehat{H}\subset D_n$, which implies that if $\phi_r:\D\rightarrow D_\infty$ is defined by $\phi_r(z):=\phi(r z)$ then $\phi_r(\D)\subset \widehat{H}\subset D_n$.
Finally, using the definition of the Kobayashi--Royden metric we have
$$K_{D_n}(v)\leq r^{-1}K_{D_\infty}(v)=(1+\epsilon)K_{D_\infty}(v). $$
\end{proof}

\begin{lem}\label{lem:upper_continuity}
	For every $H\subset D_\infty$ compact and $\epsilon>0$, there exists $N$ such that for all $n\geq N$ and for all $v\in \pi^{-1}(H)$ we have
	\begin{equation}\label{stima2}
		K_{D_\infty}(v)\leq (1+\epsilon)K_{D_n}(v).
	\end{equation}
\end{lem}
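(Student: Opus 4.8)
The plan is to prove the reverse inequality to Lemma \ref{lem:lower_continuity}, which is the more delicate direction because the comparison disc now lives in $D_n$ and we must push it into $D_\infty$. The natural approach is a normal-families argument exploiting the tautness of the ambient manifold $M$ together with hypothesis (i) (the limit of points of $D_n$ lies in $\overline{D}_\infty$) and the simplicity of $\partial D_\infty$. First I would argue by contradiction: if the conclusion fails, there exist $\epsilon>0$, a compact $H\subset D_\infty$, a subsequence (still indexed by $n$), points $p_n\in H$ and vectors $v_n\in \pi^{-1}(p_n)$ with, say, $\|v_n\|=1$ in some fixed metric, such that $K_{D_\infty}(v_n)>(1+\epsilon)K_{D_n}(v_n)$. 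Up to a further subsequence $p_n\to p_\infty\in H\subset D_\infty$ and $v_n\to v_\infty\in \pi^{-1}(p_\infty)$ with $\|v_\infty\|=1$.

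Next I would produce, for each $n$, an almost-extremal disc $\phi_n:\D\to D_n$ with $\phi_n(0)=p_n$ and $\phi_n'(0)=\lambda_n v_n$ where $\lambda_n>0$ and $K_{D_n}(v_n)\le \lambda_n^{-1}\le K_{D_n}(v_n)+\tfrac1n$. Since the $K_{D_n}$ are bounded above near $H$ by Lemma \ref{lem:lower_continuity} (comparing with $K_{D_\infty}$, which is finite and locally bounded because $D_\infty$ is Kobayashi hyperbolic), the $\lambda_n^{-1}$ are bounded, so passing to a subsequence $\lambda_n\to\lambda_\infty\in[0,\infty)$. Because $\phi_n(\D)\subset D_n\subset M$ and $M$ is taut, and $\phi_n(0)=p_n\to p_\infty\in M$ stays in a compact set, the sequence $\{\phi_n\}$ is normal: either it converges uniformly on compacts to a holomorphic $\phi_\infty:\D\to M$, or it is compactly divergent. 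Compact divergence is impossible since $\phi_n(0)\to p_\infty\in D_\infty\subset M$. Hence $\phi_\infty:\D\to M$ is holomorphic with $\phi_\infty(0)=p_\infty\in D_\infty$.

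The crux is to show $\phi_\infty(\D)\subset D_\infty$, so that $\phi_\infty$ is a competitor in the definition of $K_{D_\infty}$. By hypothesis (i), every point $\phi_\infty(\zeta)=\lim_n\phi_n(\zeta)$ lies in $\overline{D}_\infty$, so $\phi_\infty(\D)\subset\overline{D}_\infty$. Now simplicity of $\partial D_\infty$ enters: if $\phi_\infty(\D)$ met $\partial D_\infty$ it would be entirely contained in $\partial D_\infty$, contradicting $\phi_\infty(0)=p_\infty\in D_\infty$; therefore $\phi_\infty(\D)\subset D_\infty$. Then $\phi_\infty'(0)=\lambda_\infty v_\infty$, and by definition of the Kobayashi--Royden metric, $K_{D_\infty}(\lambda_\infty v_\infty)\le 1$, i.e.\ $\lambda_\infty K_{D_\infty}(v_\infty)\le 1$ (and $\lambda_\infty>0$, since otherwise $\phi_\infty$ is constant, impossible as it is a limit with controlled but we must rule out degeneration — this is handled because $\limsup_n \lambda_n^{-1}\le \limsup_n(K_{D_n}(v_n)+\tfrac1n)$ and, were $\lambda_\infty=0$, we would get $K_{D_n}(v_n)\to\infty$, contradicting the upper bound from Lemma \ref{lem:lower_continuity}). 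Hence $K_{D_\infty}(v_\infty)\le \lambda_\infty^{-1}=\lim_n\lambda_n^{-1}=\lim_n K_{D_n}(v_n)\le (1+\epsilon)^{-1}K_{D_\infty}(v_\infty)$ — using the contradiction hypothesis $K_{D_n}(v_n)<(1+\epsilon)^{-1}K_{D_\infty}(v_n)$ and continuity of $K_{D_\infty}$ along $v_n\to v_\infty$ — which forces $K_{D_\infty}(v_\infty)=0$, contradicting Kobayashi hyperbolicity of $D_\infty$ (note $v_\infty\ne 0$).

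The main obstacle is the step $\phi_\infty(\D)\subset D_\infty$: one must combine (i), which only gives closure, with the simple-boundary hypothesis to upgrade to the open domain, and simultaneously keep track that the limit disc is nonconstant by quantitative control of the derivatives $\lambda_n$. Once these are in place, the rest is a routine packaging of the normal-families limit and the definition of $K_{D_\infty}$. I would then remark that Lemmas \ref{lem:lower_continuity} and \ref{lem:upper_continuity} together give $K_{D_n}\to K_{D_\infty}$ uniformly on compact subsets of $TD_\infty$, and the corresponding statement for $k_{D_n}\to k_{D_\infty}$ follows by integrating the Royden metric along paths, using hypothesis (ii) to confine near-geodesics of $k_{D_\infty}$ inside $D_n$ for large $n$ and hypothesis (i) together with completeness of $D_\infty$ to bound from the other side.
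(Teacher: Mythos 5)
Your argument is correct and follows essentially the same route as the paper: argue by contradiction with unit-norm vectors over $H$, take almost-extremal discs in $D_n$, use tautness of $M$ to extract a locally uniform limit disc, use hypothesis (i) together with the simple-boundary assumption to place that disc inside $D_\infty$, and conclude from positivity of $K_{D_\infty}$ on nonzero vectors. The only point to tidy is your deduction that $\lambda_n\to\lambda_\infty\in[0,\infty)$: boundedness of $\lambda_n^{-1}$ only gives $\lambda_\infty\in(0,\infty]$, and finiteness of $\lambda_\infty$ should instead be drawn from $\lambda_n v_n=\phi_n'(0)\to\phi_\infty'(0)$ with $\|v_n\|=1$ (the paper avoids this issue by normalizing the discs as $\alpha_k\phi_k'(0)=v_k$ with $\alpha_k\leq(1+\epsilon)^{1/2}K_{D_{n_k}}(v_k)$, which is bounded from the start).
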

\begin{proof}
	Fix an Hermitian metric on $TD_\infty$. The result immediately follows if we prove \eqref{stima2} for all $v\in\pi^{-1}(H)$ such that $\|v\|=1$.
	Assume by contradiction that there exist $H\subset D_\infty$ compact, $\epsilon>0$ and $n_k\rightarrow+\infty$, $v_k\in \pi^{-1}(H)$ such that $\|v_k\|=1$ and
	$$K_{D_\infty}(v_k)> (1+\epsilon)K_{D_{n_k}}(v_k).$$
	We can assume that $v_k\rightarrow v_\infty\in \pi^{-1}(H)$.
	Let $\phi_k:\D\rightarrow D_{n_k}$ be a holomorphic map such that $\phi_k(0)=\pi(v_k)$ and $\alpha_k\phi_k'(0)=v_k$, where $\alpha_k\leq (1+\epsilon)^{1/2}K_{D_{n_k}}(v_k)$. In particular, $\alpha_k\leq (1+\epsilon)^{-1/2}K_{D_{\infty}}(v_k)$ and hence $\alpha_k$ is uniformly bounded in $k$. We may therefore assume that $\alpha_k$ converges to a limit $\alpha$ as $k\rightarrow +\infty$.
	
	Since $M$ is taut and $\phi_k(0)\in H$, we can assume that the sequence $\{\phi_k\}_k$ converges uniformly on compact sets to a holomorphic map $\phi:\D\rightarrow M$, which satisfies the identity $\alpha \phi'(0)=v_\infty$. Using property (i)  we have $\phi(\D)\subset\overline{D}_\infty$. Since $D_\infty$ has simple boundary in $M$ it follows from  $\phi(0)=\pi(v_\infty)\in D_\infty$ that $\phi(\D)\subset D_\infty$. Finally using the definition of the Kobayashi--Royden metric we have 
	$$K_{D_\infty}(v_\infty)\leq \alpha\leq \lim_k(1+\epsilon)^{-1/2}K_{D_\infty}(v_k)=(1+\epsilon)^{-1/2}K_{D_\infty}(v_\infty),$$
	which is a contradiction. 
\end{proof}

\begin{proof}[Proof of Proposition \ref{lem:continuity_metric}] The uniform convergence on compact subsets of the Kobayashi--Royden metric follows from Lemmas \ref{lem:lower_continuity} and \ref{lem:upper_continuity}. We now prove the local uniform convergence of the Kobayashi distance. In what follows, we denote by $\ell_M(\gamma)$ the Kobayashi--Royden length of a curve $\gamma$ on the manifold $M$. 
	
Let $H\subset D_\infty$ be  a compact set, and set $R:=\mathrm{diam}(H)$. Given $p,q\in H$ and $\epsilon\in (0,1)$, let $\gamma:[0,1]\rightarrow D_\infty$ be a piecewise $C^1$ curve joining $p$ with $q$ and satisfying $\ell_{D_\infty}(\gamma)\leq k_{D_\infty}(p,q)+\epsilon$. Fix $o\in H$. Then, for all $t\in[0,1]$,
$$k_{D_\infty}(o,\gamma(t))\leq k_{D_\infty}(o,p)+k_{D_\infty}(p,\gamma(t))\leq R+\ell_{D_\infty}(\gamma)\leq R+k_{D_\infty}(p,q)+\epsilon\leq 2R+1, $$
i.e., the support of $\gamma$ is contained in $ \overline{B_{D_\infty}(o,2R+1)}$ which is a compact subset of $D_\infty$ by the completeness of $D_\infty$. By Lemma \ref{lem:lower_continuity} there exists $N$ such that for all $n\geq N$ and for all $v\in \pi^{-1}(\overline{B_{D_\infty}(o,2R+1)})$ we have
$
K_{D_n}(v)\leq (1+\epsilon)K_{D_\infty}(v),
$
which implies $\ell_{D_n}(\gamma)\leq(1+\epsilon)\ell_{D_\infty}(\gamma)$. Hence
\[
k_{D_n}(p,q)\leq\ell_{D_n}(\gamma)\leq(1+\epsilon)\ell_{D_\infty}(\gamma)\leq (1+\epsilon)(k_{D_\infty}(p,q)+\epsilon)\leq k_{D_\infty}(p,q)+O((1+R)\epsilon).
\]
In particular, 
\begin{equation}\label{weneedthis}
	k_{D_n}(p,q)=O(1+R)
\end{equation}
for $n\geq N$. 

For the converse, notice that by (ii) $H$ is eventually contained in the domains $D_n$. Given $p,q\in H$ and $\epsilon\in (0,1)$, let $\gamma_n:[0,1]\rightarrow D_n$ be a piecewise $C^1$ curve joining $p$ with $q$ and satisfying $\ell_{D_n}(\gamma_n)\leq k_{D_n}(p,q)+\epsilon$. Fix $o\in H$ and define 
$$t_n:=\sup\{t\in[0,1]:\gamma_n([0,t])\subset B_{D_\infty}(o,2R)\}.$$ We have that $k_{D_\infty}(p,\gamma_n(t_n))\geq k_{D_\infty}(p,q)$.
Indeed, this clearly holds if $t_n=1$. If $t_n<1$, then $k_{D_\infty}(o,\gamma_n(t_n))=2R$ and thus $$k_{D_\infty}(p,\gamma_n(t_n))\geq k_{D_\infty}(o,\gamma_n(t_n))-k_{D_\infty}(p,o)\geq 2R-R=R\geq k_{D_\infty}(p,q).$$
 
Since $\overline{B_{D_\infty}(o,2R)}$ is compact, by Lemma \ref{lem:upper_continuity} there exists $N$ such that for all $n\geq N$ and for all $v\in \pi^{-1}(\overline{B_{D_\infty}(o,2R)})$ we have that $
K_{D_n}(v)\geq (1+\epsilon)^{-1}K_{D_\infty}(v).$ Hence 
\begin{align*}k_{D_n}(p,q)+\epsilon&\geq \ell_{D_n}(\gamma_n)\geq\ell_{D_n}(\gamma_n|_{[0,t_n]})\geq (1+\epsilon)^{-1}\ell_{D_\infty}(\gamma_n|_{[0,t_n]})\\
&\geq (1+\epsilon)^{-1}k_{D_\infty}(p,\gamma_n(t_n))\geq (1+\epsilon)^{-1}k_{D_\infty}(p,q),
\end{align*}
that is
$$k_{D_\infty}(p,q)\leq (1+\epsilon)(k_{D_n}(p,q)+\epsilon)\leq k_{D_n}(p,q)+O((1+R)\epsilon),
$$
where we used  \eqref{weneedthis}.
\end{proof}

 Let $W$ be a  Worm.  We call {\sl Barrett's scaling} the one-parameter group of automorphisms of $Y\times \C$ given by  
\[\Barr_\lambda:(z,w)\mapsto (z, \lambda w) \qquad (\lambda>0),\] 
which played a key role in \cite[Section 4]{Barrett}. 

For all $n\geq 1$ we set $D_n:= \Barr_n(W)$, $D_\infty:= W_{\mathrm{in}},$ and $M:= W_{\mathrm{out}}.$
\begin{rmk}
Properties i) and ii) of Proposition \ref{lem:continuity_metric} are satisfied in this case.
\end{rmk}
\begin{lem}
The domain $W_{\mathrm{in}}$ has simple boundary in  $W_{\mathrm{out}}$.
\end{lem}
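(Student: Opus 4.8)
The plan is to unwind the definition of simple boundary directly. Recall that $W_{\mathrm{in}}=Z(X_{\mathrm{in}},\theta|_{X_{\mathrm{in}}})$ and $W_{\mathrm{out}}=Z(X_{\mathrm{out}},\theta|_{X_{\mathrm{out}}})$, where $X_{\mathrm{in}}=\theta^{-1}(I^\circ)$ and $X_{\mathrm{out}}=\theta^{-1}(J^\circ)$, and that both are fiber bundles over their respective bases with the right half-plane as fiber, cut out by the inequality $\Re(we^{-i\theta(z)})>0$. So I would take a holomorphic map $\phi\colon\D\to W_{\mathrm{out}}$ with $\phi(\D)\subset\overline{W_{\mathrm{in}}}$ and $\phi(\D)\cap\partial W_{\mathrm{in}}\neq\varnothing$, write $\phi=(\phi_1,\phi_2)$ with $\phi_1\colon\D\to X_{\mathrm{out}}$ and $\phi_2\colon\D\to\C$, and aim to show $\phi(\D)\subseteq\partial W_{\mathrm{in}}$.

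The boundary $\partial W_{\mathrm{in}}$ (as a subset of $W_{\mathrm{out}}$) decomposes into two pieces according to where the inclusion $\overline{W_{\mathrm{in}}}\subset\overline{X_{\mathrm{in}}}\times\C$ degenerates: either the base point escapes, i.e. $\phi_1(\zeta)\in\overline{X_{\mathrm{in}}}\setminus X_{\mathrm{in}}=\theta^{-1}(\partial I)$ (recall $\overline{X_{\mathrm{in}}}=\theta^{-1}(I)$ since $\theta$ has no critical points on $\partial I$), or $\phi_1(\zeta)\in X_{\mathrm{in}}$ but the fiber coordinate hits the boundary of the half-plane, i.e. $\Re\bigl(\phi_2(\zeta)e^{-i\theta(\phi_1(\zeta))}\bigr)=0$. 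First I would treat the base: the function $\zeta\mapsto\theta(\phi_1(\zeta))$ is harmonic on $\D$ (composition of a harmonic function with a holomorphic one) and takes values in $I$ because $\phi(\D)\subset\overline{W_{\mathrm{in}}}$ forces $\phi_1(\D)\subset\overline{X_{\mathrm{in}}}=\theta^{-1}(I)$; if it attains a value in $\partial I$ somewhere in $\D$, that is an interior extremum of a harmonic function, so it is constant equal to that endpoint of $I$, and then $\phi_1(\D)\subset\theta^{-1}(\partial I)$, whence $\phi(\D)\subset\partial W_{\mathrm{in}}$ and we are done. So we may assume $\theta\circ\phi_1$ maps $\D$ into $I^\circ$, i.e. $\phi_1(\D)\subset X_{\mathrm{in}}$.

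In that remaining case I would reduce to the fiber. Passing to a small disc in $\D$ we may work over an open set $U\subset X_{\mathrm{in}}$ on which $\theta$ has a harmonic conjugate, so $F=v+i\theta$ is holomorphic and $g(\zeta):=e^{-F(\phi_1(\zeta))}\phi_2(\zeta)$ is holomorphic on that disc with $\Re g\geq 0$ everywhere (since $\phi(\D)\subset\overline{W_{\mathrm{in}}}$) and $\Re g=0$ at some point (since $\phi(\D)$ meets $\partial W_{\mathrm{in}}$ and we have excluded the base-degeneracy). By the minimum principle applied to the harmonic function $\Re g\geq 0$, $\Re g\equiv 0$ on that disc, hence on all of $\D$ by the identity principle; therefore $\Re\bigl(\phi_2(\zeta)e^{-i\theta(\phi_1(\zeta))}\bigr)\equiv 0$ and $\phi(\D)\subseteq\partial W_{\mathrm{in}}$. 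I expect the only mildly delicate point to be the bookkeeping of which stratum of $\partial W_{\mathrm{in}}$ one lands in and justifying $\overline{W_{\mathrm{in}}}=\{(z,w): z\in\theta^{-1}(I),\ \Re(we^{-i\theta(z)})\geq 0\}$ cleanly; everything else is the harmonic minimum/maximum principle plus analytic continuation.
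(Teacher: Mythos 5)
Your argument is correct, but it is organized differently from the paper's, which is shorter. The paper observes that, since $\partial W_{\mathrm{in}}$ is the boundary \emph{inside the ambient manifold} $M=W_{\mathrm{out}}$, any boundary point $(z_0,w_0)$ hit by $\phi$ already satisfies the strict fiber inequality $\Re(w_0e^{-i\theta(z_0)})>0$, so automatically $z_0\in\partial X_{\mathrm{in}}$; your ``fiber-boundary'' stratum $\{z\in X_{\mathrm{in}},\ \Re(we^{-i\theta(z)})=0\}$ is therefore disjoint from $W_{\mathrm{out}}$ and that case is vacuous (your minimum-principle argument still disposes of it, since $\Re g\equiv 0$ contradicts $\phi(\D)\subset W_{\mathrm{out}}$, so no harm is done --- it is just redundant). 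For the remaining, genuine case, the paper applies the open mapping theorem to $\pi_1\circ\phi:\D\to X_{\mathrm{out}}$, whose image lies in $\overline{X_{\mathrm{in}}}$ and meets $\partial X_{\mathrm{in}}$, to conclude that $\pi_1\circ\phi$ is constant and hence $\phi(\D)\subset\partial W_{\mathrm{in}}$; you instead apply the maximum principle to the harmonic function $\theta\circ\phi_1$ with values in $I$, attaining an endpoint of $I$ at an interior point. The two tools are interchangeable here (the paper's open-mapping step implicitly uses that $\overline{X_{\mathrm{in}}}=\theta^{-1}(I)$ has no interior points on $\partial X_{\mathrm{in}}$, which comes from the noncriticality of $\theta$ on $\theta^{-1}(\partial I)$, exactly the fact you invoke), so your route is a valid, slightly more hands-on variant; its only cost is the extra bookkeeping of the boundary strata, and the closure identification you worry about is only needed in the easy inclusion $\overline{W_{\mathrm{in}}}\subseteq\{\theta(z)\in I,\ \Re(we^{-i\theta(z)})\geq 0\}$, which is immediate by continuity.
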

\begin{proof}
Let $\varphi\colon \D\to W_{\mathrm{out}}$ be a holomorphic map such that 
$\varphi(\D)\subset \overline W_{\mathrm{in}}$.  Assume that there exists $\zeta_0\in\D$ such that $$(z_0,w_0):=\phi(\zeta_0)\in \partial W_{\mathrm{in}}.$$
Clearly $z_0\in\partial X_{\mathrm{in}}$. If $\pi_1:X_{\mathrm{out}}\times\C\rightarrow X_{\mathrm{out}}$ denotes  the projection to the first variable, then $\pi_1\circ\phi:\D\rightarrow X_{\mathrm{out}}$ is a holomorphic function 
with image contained in $\overline X_{\mathrm{in}}$ and such that $(\pi_1\circ\phi)(\zeta_0)\in\partial X_{\mathrm{in}}$, hence by the open mapping theorem $\pi_1\circ\phi$ is constant. Thus $\phi(\D)\subset \partial W_{\mathrm{in}}$.\end{proof}

We are finally able to prove our main theorem.

\begin{proof}[Proof of Theorem \ref{main}]
By contradiction, assume that there exists $\delta\geq 0$ such that for each $o,x,y,z\in W$ we have
$$\min\{(x|y)^{k_W}_o,(y|z)^{k_W}_o\}-(x|z)^{k_W}_o\leq\delta.$$
Now since for all $n\geq 1$  the Barrett's scaling $B_n$ is an isometry between $W$ and $B_n(W)$ we have, for each  $o,x,y,z\in B_n(W)$, $$\min\{(x|y)^{k_{B_n(W)}}_o,(y|z)^{k_{B_n(W)}}_o\}-(x|z)^{k_{B_n(W)}}_o\leq\delta.$$
By Proposition \ref{lem:continuity_metric} we have,  for all $o,x,y,z\in W_{\mathrm{in}}$,
$$\min\{(x|y)^{k_{W_{\mathrm{in}}}}_o,(y|z)^{k_{W_{\mathrm{in}}}}_o\}-(x|z)^{k_{W_{\mathrm{in}}}}_o=\lim_{n\rightarrow+\infty}\min\{(x|y)^{k_{B_n(W)}}_o,(y|z)^{k_{B_n(W)}}_o\}-(x|z)^{k_{B_n(W)}}_o\leq\delta.$$
Thus  $W_{\mathrm{in}}$ is Gromov hyperbolic, which contradicts Corollary \ref{winnotg}.
\end{proof}


\begin{thebibliography}{99}
	
\bibitem{AY} M. Adachi, J. Yum, {\sl Diederich--Fornæss and Steinness Indices for Abstract CR Manifolds}, J. Geom. Anal. {\bf 31.8} (2021), 8385--8413.

\bibitem{BB} Z. Balogh, M. Bonk,  {\sl Gromov hyperbolicity and the Kobayashi metric on strictly pseudoconvex domains}, Comment. Math. Helv. {\bf 75}, no. 3 (2000), 504--533.

\bibitem{BER} M. S. Baouendi, P. Ebenfelt, L. P. Rothschild, {\sl Real Submanifolds in Complex Space and their Mappings}, Princeton University Press (1999).

\bibitem{Barrett}
D. Barrett, {\sl Behavior of the Bergman projection on the Diederich-Forn\ae ss worm},  Acta mathematica {\bf 168} (1992) 1--10.

\bibitem{Barrett2}
D. Barrett,  {\sl The Bergman projection on sectorial domains},  Contemp. Math. {\bf 212} (1998), 1--24.

\bibitem{bedfor} E. Bedford, J.E.Forn\ae ss, {\sl A construction of peak functions on weakly pseudoconvex domains}, Ann. of Math. {\bf 107}, no. 3 (1978), 555--568.

\bibitem{BGZ} F. Bracci, H. Gaussier, A. Zimmer, {\sl The geometry of domains with negatively pinched Kaehler metrics}, J. Differential Geom.

\bibitem{BH} M. Bridson, A. Haefliger, {\sl Metric Spaces of Nonpositive Curvature}, Grundlehren der Mathematischen Wissenschaften [Fundamental Principles of Mathematical Sciences] {\bf 319}, Springer-Verlag, Berlin (1999).

\bibitem{DM} G. Dall'Ara, S. Mongodi, {\sl The core of the Levi distribution}, arXiv:2109.04763

\bibitem{DM2} G. Dall'Ara, S. Mongodi, {\sl Remarks on the Levi core}, in preparation. 

\bibitem{DF} K. Diederich and J. E. Forn\ae ss, {\sl Pseudoconvex domains: An example with nontrivial Nebenh\"ulle}, Math. Ann. {\bf 225} (1977), 275--292.

\bibitem{DG} F. Docquier, H. Grauert, {\sl Levisches Problem und Rungescher Satz f\"ur Teilgebiete Steinscher Mannigfaltigkeiten}, Math. Ann. {\bf 140} (1960), 94--123.

\bibitem{fia}
M. Fiacchi, {\sl Gromov hyperbolicity of pseudoconvex finite type domains in $\C^2$}, Math. Ann. {\bf 382}, no. 1 (2022), 37--68. 

\bibitem{gaussier-taut}
H. Gaussier, {\sl Tautness and complete hyperbolicity of domains in $\C^n$}, Proc. Amer. Math. Soc. {\bf 197}, no. 1 (1999), 105--116. 

\bibitem{GS} H. Gaussier, H. Seshadri, {\sl On the Gromov hyperbolicity of convex domains in $\C^n$}. Comput. Methods Funct. Theory {\bf 18} (2018), 617--641.

\bibitem{kobayascione} S. Kobayashi, {\sl Hyperbolic Complex Spaces}, Grundlehren der Mathematischen Wissenschaften [Fundamental Principles of Mathematical Sciences] {\bf 318}, Springer-Verlag, Berlin (1998).

\bibitem{KP} S. Krantz, M. Peloso, {\sl Analysis and geometry on Worm domains}, J. Geom. Anal. {\bf 18} (2008), 478--510.

\bibitem{noell}
 A. Noell, {\sl  Peak points for pseudoconvex domains: a survey}, J. Geom. Anal.  {\bf 18}, no.4 (2008), 1058--1087.

\bibitem{Zim1} A. Zimmer, {\sl Gromov hyperbolicity and the Kobayashi metric on convex domains of finite type}, Math. Ann. {\bf 365}, no. 3-4 (2016), 1425--1498.

\bibitem{Zim2} A. Zimmer, {\sl Gromov hyperbolicity, the Kobayashi metric, and $ \mathbb{C}$-convex sets}, Trans. Amer. Math. Soc. {\bf 369}, no. 12 (2017), 8437--8456.

\end{thebibliography}
  \end{document}